\documentclass{amsart}
\usepackage{amssymb}
\usepackage{amsthm}
\usepackage{mathtools}
\usepackage{environ}
\usepackage{enumerate}
\usepackage{microtype}
\usepackage{enumitem}
\usepackage{mathrsfs}
\usepackage[utf8]{inputenc}

\theoremstyle{plain}
\newtheorem{theorem}{Theorem}[section]
\newtheorem{lemma}[theorem]{Lemma}
\newtheorem{proposition}[theorem]{Proposition}
\newtheorem{corollary}[theorem]{Corollary}

\theoremstyle{definition}
\newtheorem{definition}[theorem]{Definition}

\theoremstyle{remark}
\newtheorem*{remark}{Remark}

\DeclareFontFamily{U}{mathx}{\hyphenchar\font45}
\DeclareFontShape{U}{mathx}{m}{n}{<-> mathx10}{}
\DeclareSymbolFont{mathx}{U}{mathx}{m}{n}
\DeclareMathAccent{\widebar}{0}{mathx}{"73}

\newcommand{\bd}{\partial}

\newcommand{\C}{\mathbb{C}}
\newcommand{\D}{\mathbb{D}}
\newcommand{\R}{\mathbb{R}}
\newcommand{\N}{\mathbb{N}}
\newcommand{\psh}{\mathcal{PSH}}

\newcommand{\suchthat}{\mathrel{;}}

\title[Quasibounded solutions to the complex Monge--Ampère equation]{Quasibounded solutions to the complex Monge--Ampère equation}
\author{Mårten Nilsson}
\thanks{This work was supported by a research grant from the Sverker Lerheden Foundation.}
\address{Department of Mathematics \\
Stockholm University \\
106 91 Stockholm, Sweden}
\email{marten.nilsson@math.su.se}

\subjclass[2010]{Primary 32U05; Secondary 32U15, 31C10, 31C05}

\begin{document}

\begin{abstract}
 We study the Dirichlet problem for the complex Monge--Ampère operator on a B-regular domain $\Omega$, allowing boundary data that is singular or unbounded. We introduce the concept of \textit{pluri-quasibounded} functions on $\Omega$ and $\partial \Omega$, defined by the existence of plurisuperharmonic majorants that dominate their absolute value in a strong sense---that is, the ratio of the function to the majorant tends to zero as the function tends to infinity. For such data, we prove existence and uniqueness of solutions in the Błocki--Cegrell class $\mathcal{D}(\Omega)$, using a recently established comparison principle. In the unit disk, our approach recovers harmonic functions represented as Poisson integrals of $L^1$ boundary data with respect to harmonic measure, and our characterization extends to all regular domains in $\R^n$, when the boundary data is continuous almost everywhere. We also describe how boundary singularities propagate into the interior via a refined pluripolar hull. 
\end{abstract}
\maketitle
\section{Introduction}
The complex Monge--Ampère equation plays a fundamental role in pluripotential theory, complex analysis, and complex geometry. While there has been substantial progress in understanding the Dirichlet problem in various settings—particularly when the boundary data is continuous—much less is understood in the presence of singularities on the boundary. In this work, we develop a framework for solving Dirichlet-type problems for the complex Monge–Ampère operator on B-regular domains, under general and potentially unbounded boundary conditions.

We are in particular interested in the case where the boundary data are allowed to be singular, but with control in the classical $L^1$ sense with respect to harmonic measure. Motivated by earlier results on quasibounded harmonic and plurisubharmonic functions \cite{arsove, nilsson, nilsson3, parreau,yamashita}, we introduce and study the class of \textit{(pluri-)quasibounded} boundary functions: functions whose absolute value is controlled, in a limiting sense, by a (pluri)superharmonic function defined on the interior of the domain. This notion turns out to be well suited for capturing integrability near the singularities, while ensuring the existence and uniqueness of maximal solutions, and for the inhomogeneous case, solutions in the Błocki–Cegrell class $\mathcal{D}(\Omega)$ (the canonical domain for the complex Monge–Ampère operator \cite{blocki}).

In the first part of the paper, we revisit the classical Dirichlet problem for harmonic functions on the unit disk $\D$, providing a motivating case for our approach. We show that a harmonic function $u$ arises as the Poisson integral of an $L^1$-boundary function if and only if it is \textit{quasibounded}, meaning that there exists a positive superharmonic function $v$ such that
\[
\frac{v(z)}{|u(z)|} \to \infty \quad \text{as } |u(z)| \to \infty.
\]
 An equivalent rephrasing is that $u$ and $v$ relate in the following way: For each $\varepsilon >0$, there exists $M_\varepsilon$ such that
\[
 |u| \leq \varepsilon v + M_\varepsilon \quad \text{in }\D.
\]
This perspective allows one to characterize boundary integrability in terms of potential-theoretic behavior within the domain, rather than through direct reference to harmonic measure. More precisely, given a boundary function $\varphi$ taking values in the extended reals or in the extended complex plane, we say that $\varphi$ is quasibounded if there exists a positive superharmonic function $v$ such that for each $\varepsilon >0$, there exists $M_\varepsilon$ such that
    \[
    |\varphi(\zeta)|   \leq \liminf_{z \rightarrow \zeta} \varepsilon v(z) + M_\varepsilon, \quad \forall \zeta\in\partial \D,
    \]
yielding the equivalence
\[
\varphi \in L^1(\partial \D) \iff \varphi \text{ quasibounded.}
\]
We also show that this characterization of integrability with respect to harmonic measure generalizes to every \(\Omega \subset \C_\infty\) with non-polar boundary (under the convention that $\infty \in \partial \Omega$ if $\Omega$ is unbounded) and to every regular domain in $\R^n$, under the assumption that $\varphi$ is continuous and locally bounded outside a set of harmonic measure 0. Finally, by identifying sets of harmonic measure 0 with the b-polar sets, we remove all dependence on harmonic measure. Recall that a set $A \subset \partial \Omega$ is said to be b-(pluri)polar if there exists a negative (pluri)subharmonic function $u$, not identically $-\infty$, such that 
\[
\limsup_{\Omega\ni z\rightarrow A}u(z) =-\infty.
\]

The main results of the paper concern the complex Monge–Ampère equation in higher dimensions. In Section 3, we show that if $\Omega \subset \mathbb{C}^n$ is a B-regular domain and $\varphi : \partial \Omega \to \mathbb{R} \cup \{ \pm\infty \}$ is \textit{pluri-quasibounded} (quasibounded by a plurisuperharmonic function), continuous and locally bounded outside a b-pluripolar set $E_\varphi$, then there exists a unique pluri-quasibounded solution to the Dirichlet problem
    \[
    \begin{cases}
    u \in {\mathcal{D}}(\Omega) \\
    (dd^cu)^n=\mu \\
        \lim_{w \rightarrow z}u(w) = \varphi(z) \qquad \forall z \in \partial \Omega \setminus  E_\varphi,
    \end{cases}
    \]
provided $\varphi$ admits a minorant in the Cegrell class $\mathcal{E}(\Omega)$ and $\mu$ is a compliant measure. The method of proof relies on a combination of envelope techniques, monotonicity arguments, and critically on a comparison principle in Cegrell classes due to Åhag--Cegrell--Czyż--Hiep~\cite{ahag2} (which was recently improved to also encompass singular measures in \cite{ahag}).

In Section 4, we investigate how boundary singularities affect interior regularity. We define a refined version of the pluripolar hull and show that the solution $u$ is continuous outside the closure of the b-pluripolar hull of the boundary singularities, which reflects the propagation of boundary singularities into the interior. The characterization is optimal, since the discontinuity set may coincide with the closure of the b-pluripolar hull already when the boundary function is bounded \cite{nilsson2}. 

Finally, Section 5 treats the homogeneous case, replacing $(dd^c u)^n = 0$ with the more general notion of maximality. We show that under the same assumptions on the boundary data, a unique pluri-quasibounded, maximal solution exists if and only if the singularity set is b-pluripolar. Unlike the inhomogeneous case, this result does not require the existence of a minorant in $\mathcal{E}(\Omega)$, making it applicable in strictly broader settings (see the first remark to Theorem~\ref{mainresult}).

The author would like to thank Alan Sola for many valuable comments on earlier versions of the manuscript.

\section{Quasibounded harmonic functions on domains in the plane}
Given a harmonic function $u$ defined on the unit disk, we shall prove that there exists a positive superharmonic function $v$ with the property that
    \begin{equation}\label{quasi1}
      \frac{v(z)}{|u(z)|} \rightarrow \infty \text{ as }|u(z)| \rightarrow \infty, \tag{$*$}   
    \end{equation}
     if and only if $u$ can be represented by an element in $L^1(\partial \D)$. The tools necessary are due to Parreau~\cite{parreau}, who proved (or at least remarked on) a similar result for positive harmonic functions $u$ for which $\phi \circ u$ has a harmonic majorant, where $\phi:\R\to\R$ is increasing, convex and
\[
\lim_{t\rightarrow \infty} \frac{\phi(t)}{t} = \infty.
\]
The essence of the argument is that such harmonic functions are \textit{quasibounded} in the following sense: $u$ is quasibounded if there exists a sequence of bounded real-valued harmonic functions $u_n$ such that $u_n\nearrow u$. For (unbounded) complex-valued functions, we instead make the following definition. 
\begin{definition}
We say that $u$ is \textit{quasibounded} if there exists a positive superharmonic function $v$ such that (\ref{quasi1}) holds.
\end{definition}
The following lemma (see also \cite[Theorem~4.1]{arsove}) shows that the two notions coincide for real-valued harmonic functions bounded from below.
\begin{lemma}\label{karaktarlemma}
    Suppose that $u$ is harmonic, real-valued, and bounded from below. Then $u$ is quasibounded if and only if there exists a sequence of bounded harmonic functions $u_k$ such that $u_k\nearrow u$.
\end{lemma}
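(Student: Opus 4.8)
The plan is to reduce to the case $u \geq 0$ and then prove the two implications separately, exploiting the reformulation of (\ref{quasi1}) noted in the introduction: that $u$ is quasibounded if and only if for each $\eps > 0$ there is a constant $M_\eps$ with $|u| \leq \eps v + M_\eps$ on $\D$. Since $u$ is bounded from below, replacing $u$ by $u + C$ for a suitable constant affects neither the existence of an increasing sequence of bounded harmonic functions nor the validity of (\ref{quasi1}) (as $|u| \to \infty$ forces $u \to +\infty$, where $u$ and $u + C$ are comparable). I may therefore assume $u \geq 0$, so that $|u| = u$.

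For the implication from the increasing sequence to quasiboundedness, suppose $u_k \nearrow u$ with each $u_k$ bounded and harmonic. I set $w_j := u_{j+1} - u_j \geq 0$, so each $w_j$ is nonnegative and harmonic and $u = u_1 + \sum_{j \geq 1} w_j$, the partial sums being the $u_N$. Evaluating at the origin, $\sum_j w_j(0) = u(0) - u_1(0) < \infty$, so by the standard fact that a convergent series of nonnegative terms admits divergent weights keeping it convergent, I can choose $c_j \nearrow \infty$ with $\sum_j c_j w_j(0) < \infty$. Then $v := 1 + \sum_j c_j w_j$ is a sum of nonnegative harmonic functions finite at one point, hence, by Harnack, a positive harmonic function on $\D$. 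Given $\eps > 0$, I pick $N$ with $c_j \geq 1/\eps$ for $j \geq N$; then $\eps v \geq \sum_{j \geq N} w_j$, whence $u = u_N + \sum_{j \geq N} w_j \leq \eps v + M_\eps$ with $M_\eps := \sup_\D u_N < \infty$. This is exactly the reformulated condition, so $u$ is quasibounded.

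For the converse, the crux is to convert the superharmonic majorant into a harmonic one and then read off information about the boundary measure of $u$. Assume $u \leq \eps v + M_\eps$ for all $\eps$. Writing the Riesz decomposition $v = h + p$ with $h \geq 0$ harmonic and $p \geq 0$ a Green potential, the harmonic function $u - \eps h - M_\eps$ is dominated by the potential $\eps p$ and hence, by a harmonic minorant argument, is $\leq 0$; thus $u \leq \eps h + M_\eps$ with $h$ positive and harmonic. Representing $u = P[\mu]$ and $h = P[\lambda]$ by the Herglotz theorem and comparing boundary measures, this yields $\mu \leq \eps \lambda + M_\eps \sigma$, where $\sigma$ is normalized arc length. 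Passing to singular parts in the Lebesgue decomposition gives $\mu_s \leq \eps \lambda_s$ for every $\eps > 0$, so $\mu_s = 0$ and $\mu = f \sigma$ with $f \in L^1(\partial \D)$, $f \geq 0$.

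Finally, the functions $u_k := P[\min(f,k)\,\sigma]$ are bounded and harmonic and increase to $P[f\sigma] = u$ by monotone convergence, completing the converse. I expect the main obstacle to be this second implication, and within it the reduction to a harmonic majorant: the passage from $u \leq \eps v + M_\eps$ to $u \leq \eps h + M_\eps$ relies on the Riesz decomposition together with the fact that a harmonic function lying below a potential must be nonpositive, and the subsequent comparison of Herglotz measures and their singular parts is where the absolutely continuous (that is, $L^1$) nature of the boundary data is extracted.
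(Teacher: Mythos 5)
Your proposal is correct, but it proves the hard direction by a genuinely different route than the paper. In the easy direction (increasing sequence $\Rightarrow$ quasibounded) you and the paper do essentially the same thing: build a Harnack series from the increments of $(u_k)$. The paper passes to a subsequence with $u(z_0)-u_k(z_0)\le 2^{-k}$, takes $v=\sum_k(u-u_k)$ (in effect your weighting with $c_j=j$), and verifies (\ref{quasi1}) via the auxiliary convex gauge $\varphi(t)=\sum_k\max(t-n_k,0)$; your divergent weights $c_j\nearrow\infty$ verify the $\eps$-$M_\eps$ reformulation directly, which is a cleaner bookkeeping of the same idea. The real divergence is in the converse: the paper stays elementary and domain-agnostic --- for $u>K$ quasibounded by $v$, the functions $\max(u-\tfrac{v}{k},K)$ are bounded subharmonic and their least harmonic majorants increase to $u$ --- whereas you invoke the Riesz decomposition $v=h+p$, the fact that a harmonic minorant of a Green potential is $\le 0$, the Herglotz representation $u=P[\mu]$, $h=P[\lambda]$, and a comparison of singular parts to get $\mu_s\le\eps\lambda_s$ for every $\eps$, hence $\mu=f\sigma$ with $f\in L^1$, followed by truncation. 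Each step checks out on $\D$ (in particular, pointwise domination of Poisson integrals does imply domination of the representing measures, by uniqueness in the Herglotz theorem), and you in fact prove the stronger content of Theorem~\ref{karaktar_l1} en route, so on the disk your argument buys more --- it also exhibits a \emph{harmonic} quasibound, matching the paper's remark after that theorem. What you lose is portability: the Herglotz step ties your proof to the disk, while the paper's envelope argument transfers verbatim to general domains, which matters because the lemma is reused in exactly that generality in the proof of Theorem~\ref{dirichlet_endim}; there your route would require the representation theory for general domains (e.g.\ \cite[Theorem~4.3.3]{ransford}) rather than coming for free.
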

\begin{proof}
Suppose that $u>K$, and note that if $u$ is quasibounded by a superharmonic function $v$, then 
    \[
      \frac{v(z)}{k|u(z)|} \rightarrow \infty \text{ as }|u(z)| \rightarrow \infty
    \]
holds for all $k \in \N$. In particular, $\max(u - \frac{v}{k},K)$ is a bounded subharmonic function, and taking the lower envelope of all of its superharmonic majorants produces bounded harmonic functions $u_k$, with $u_k\nearrow u$ as $k \rightarrow \infty$.

For the other direction, fix a point $z_0$ and choose a subsequence such that $u(z_0) - u_k(z_0) \leq \frac{1}{2^k}$. Then 
\[
v = \sum^{\infty}_{k=1} (u - u_k)
\]
is a (super)harmonic function by Harnack's theorem. To see that $v$ has the desired properties, let $n_k$ be an increasing sequence of integers such that $u_k \leq n_k$ and note that $\max(u-n_k,0)$ is majorized by the harmonic function $u - u_k$. Now define 
\[
\varphi(t) := \sum^{\infty}_{k=1}\max(t-n_k, 0) \quad -\infty < t < +\infty.
\]
This series reduces to a finite sum whenever $t<+\infty$, and is piecewise linear with slope $k$ on the interval $[n_k, n_{k+1}]$, and therefore increasing and convex. Hence 
\[
      \frac{\varphi(u(z))}{u(z)} \rightarrow \infty \text{ as }u(z) \rightarrow \infty,
    \]
and since $\varphi \circ u \leq v$, the claim follows.
\end{proof}
We are now ready to prove
\begin{theorem}\label{karaktar_l1}
A harmonic function $u$ defined on the unit disk is quasibounded if and only if it is represented by an element in $L^1(\partial \D)$.
\end{theorem}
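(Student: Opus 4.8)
The plan is to prove both implications through the additive reformulation of quasiboundedness recorded in the introduction: $u$ is quasibounded by a positive superharmonic $v$ if and only if for every $\varepsilon>0$ there is a constant $M_\varepsilon$ with $|u|\le \varepsilon v+M_\varepsilon$ on $\D$. I would first check this equivalence directly from the definition of the limit in~(\ref{quasi1}), since the additive form is far more convenient for estimating integral means than the quotient form. Throughout I write $u_r(\theta)=u(re^{i\theta})$ and $m_v(r)=\tfrac{1}{2\pi}\int_0^{2\pi}v(re^{i\theta})\,d\theta$.

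For the implication that an $L^1$-representation forces quasiboundedness, suppose $u=P[\varphi]$ with $\varphi\in L^1(\partial\D)$. The key observation is the pointwise estimate $|u|\le P[|\varphi|]=:w$, where $w$ is a nonnegative harmonic function. Writing $w_k=P[\min(|\varphi|,k)]$ produces bounded harmonic functions increasing to $w$ by monotone convergence, so $w$ is an increasing limit of bounded harmonics. As $w\ge 0$ is bounded from below, Lemma~\ref{karaktarlemma} supplies a positive superharmonic $v$ realizing the quasiboundedness of $w$, and the additive reformulation then gives $|u|\le w\le\varepsilon v+M_\varepsilon$, so $u$ is quasibounded.

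For the converse, assume $u$ is quasibounded by $v$. I would first show $u\in h^1$: taking $\varepsilon=1$ gives $|u|\le v+M_1$, and since the circular means $m_v(r)$ of a positive superharmonic function are finite for some radius $r_0$ and decreasing in $r$, the means of $|u_r|$ stay bounded as $r\to 1$, while they are trivially bounded on $\{|z|\le r_0\}$. Hence $u=P[\mu]$ for a finite complex measure $\mu$. To upgrade this to absolute continuity I would prove uniform integrability of $\{u_r\}_{r<1}$. On the set $\{|u_r|>\lambda\}$ the bound $|u|\le\varepsilon v+M_\varepsilon$ yields
\[
\int_{\{|u_r|>\lambda\}}|u_r|\,d\theta \;\le\; \varepsilon\int_0^{2\pi} v(re^{i\theta})\,d\theta \;+\; M_\varepsilon\,\bigl|\{|u_r|>\lambda\}\bigr|,
\]
where for $r\ge r_0$ the first term is at most $2\pi\varepsilon\,m_v(r_0)$ by monotonicity of the means, and the second is at most $M_\varepsilon\lambda^{-1}\sup_r\int_0^{2\pi}|u_r|\,d\theta$ by Chebyshev's inequality. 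Given $\delta>0$, I choose $\varepsilon$ small to control the first term and then $\lambda$ large to control the second, uniformly in $r$. Dunford--Pettis then extracts a weak-$L^1$ limit of $u_r$ which, matched against continuous test functions with the weak-$*$ limit $\mu$, forces $d\mu=\varphi\,d\theta$ with $\varphi\in L^1(\partial\D)$.

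The main obstacle is this converse direction, specifically the uniform integrability estimate: the point is that $\varepsilon$ may be taken arbitrarily small \emph{before} fixing $\lambda$, which is exactly the strength supplied by the quasibounded hypothesis and \emph{not} by mere membership in $h^1$. Secondary care is needed with the superharmonic means near the origin, where $v(0)$ may equal $+\infty$; this is handled by fixing a radius $r_0$ with $m_v(r_0)<\infty$ and combining monotonicity of the means for $r\ge r_0$ with the uniform boundedness of $u$ on $\{|z|\le r_0\}$. This argument also recovers the classical Arsove--Parreau picture, in which the singular part of the boundary measure is precisely the obstruction to quasiboundedness.
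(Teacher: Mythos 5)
Your proposal is correct, but its converse direction takes a genuinely different route from the paper's. In the forward direction you streamline what the paper does: instead of splitting $\varphi$ into four nonnegative parts and summing four quasibounding functions $v_i$, you use the single majorant $w=P[|\varphi|]\ge|u|$, apply Lemma~\ref{karaktarlemma} to $w$, and transfer the additive bound $|u|\le w\le\varepsilon v+M_\varepsilon$ to $u$ — same idea, more economical. For the converse the paper stays inside the superharmonic-majorant machinery: it passes to $\operatorname{Re} u$, takes the least harmonic majorant $\tilde u$ of $\max(\operatorname{Re} u,0)$, checks that $\tilde u$ is still quasibounded, and then invokes Lemma~\ref{karaktarlemma} plus monotone convergence to produce the $L^1$ representative. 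You bypass the lemma entirely via classical $h^1$ theory: the bound $|u|\le\varepsilon v+M_\varepsilon$ together with the non-increasing circular means of the positive superharmonic $v$ gives $L^1$-bounded means and, crucially, uniform integrability of $\{u_r\}$ — and your quantifier order ($\varepsilon$ chosen before $\lambda$) is exactly the point where quasiboundedness exceeds mere $h^1$ membership — after which Dunford--Pettis forces the Herglotz measure to be absolutely continuous. Both arguments are complete; yours is quantitative and exhibits the singular part of the boundary measure as the precise obstruction, while the paper's softer argument is the one that ports to general domains: it is reused in the uniqueness part of Theorem~\ref{dirichlet_endim}, where Poisson integrals are replaced by integrals against harmonic measure and your circular-means and weak-compactness steps have no direct analogue. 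Two small points to make explicit in a write-up: extract the weak-$*$ limit $\mu$ and the weak-$L^1$ limit $\varphi$ along the same subsequence before matching them against $C(\partial\D)$, and justify the monotonicity of $r\mapsto m_v(r)$ (e.g. concavity in $\log r$ combined with positivity of $v$), which is the standard fact handling the case $v(0)=+\infty$, as you correctly flag.
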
 
\begin{proof}
Suppose that $u$ is represented by $\varphi \in L^1(\partial \D)$, and write 
\[
\varphi=\varphi_1 - \varphi_2 +i(\varphi_3 - \varphi_4)
\]
with each $\varphi_i \geq 0$.  Then the bounded harmonic functions 
\[
u^{(i)}_n(z) := \int_0^{2\pi}   P_z(\theta) \min(\varphi_i(\theta), n) \,\frac{d\theta}{2\pi}
\]
approximate the Poisson integral $u^{(i)}$ of $\varphi_i$ from below, and by the lemma, there exist (super)harmonic functions $v_i$ such that 
    \[
      \frac{v_i(z)}{u^{(i)}(z)} \rightarrow \infty \text{ as }u^{(i)}(z) \rightarrow \infty,
    \]
since $u^{(i)}\geq 0$. Now let $z_k$ be a sequence such that $|u(z_k)| \rightarrow \infty$, and let $n_k \in \{1,2,3,4\}$ be such that $u^{(n_k)}(z_k)=\max(u^{(i)}(z_k))$. It follows that $u^{(n_k)}(z_k)\rightarrow \infty$ as $k\rightarrow \infty$, and taking $v = \sum v_i$, we have 
    \[
      \frac{v(z_k)}{|u(z_k)|} \geq \frac{v_{n_k}(z_k)}{4 u^{(n_k)}(z_k)}  \rightarrow \infty,
    \]
which shows that $u$ is quasibounded.

Conversely, suppose that there exists a positive superharmonic function $v$ such that
    \[
      \frac{v(z)}{|u(z)|} \rightarrow \infty \text{ as }|u(z)| \rightarrow \infty.
    \]
It is enough to show that the real part of $u$ is represented by an element in $L^1(\partial \D)$. Since $|\operatorname{Re} u| \leq |u|$, we note that $\operatorname{Re} u$ is also quasibounded, and that we for any $\varepsilon>0$ may find $K_\varepsilon>0$ such that 
\[
\operatorname{Re} u < \varepsilon v  + K_\varepsilon.
\]
Hence, the least harmonic majorant $\tilde u$ to the subharmonic function $\max(\operatorname{Re} u, 0)$ satisfies
\[
0\leq  \max(\operatorname{Re} u, 0) \leq \tilde u < \varepsilon v  + K_\varepsilon,
\]
and so
    \[
      \frac{v(z)}{|\tilde u(z)|}\geq \frac{v(z)}{\varepsilon v(z)  + K_\varepsilon}  \rightarrow \frac{1}{\varepsilon} \quad\text{ as }|\tilde u(z)| \rightarrow \infty.
    \]
We conclude that $\tilde u$ is quasibounded as well. By the lemma, there exists a sequence of bounded harmonic functions $u_n \nearrow \tilde u$, all represented by elements 
\[
\varphi_n \in L^{\infty}(\partial \D) \subset L^1(\partial \D),
\]
and by the monotone convergence theorem, there exists an element $\varphi \in L^1(\partial \D)$ such that 
\[
\tilde u(z)  = \int_0^{2\pi} \varphi(e^{i\theta}) \frac{1-|z|^2}{|e^{i\theta}-z|^2}\,\frac{d\theta}{2\pi}.
\]
Similarly, as the difference $\tilde u - \operatorname{Re} u$ is non-negative, harmonic and quasibounded, $\tilde u - \operatorname{Re} u$ and $\operatorname{Re} u$ are also represented by elements in $L^1(\partial \D)$, which finishes the proof.
\end{proof}
\begin{remark}
Note that these proofs also imply that $u$ is represented by an element in $L^1(\partial \D)$ if and only if there exists a positive \textit{harmonic} function $v$ such that 
    \[
      \frac{v(z)}{|u(z)|} \rightarrow \infty \text{ as }|u(z)| \rightarrow \infty,
    \]
where $v$ is quasibounded as well. We should also remark (see also \cite[Theorem~2]{yamashita}) that this reduces the F. and M. Riesz theorem to the following statement:
\[
\textit{$f$ analytic with $|f|\leq h$ for some harmonic function $h$}\implies \textit{$f$ quasibounded.}
\]
This implication is a consequence of the fact that $\max(\log |f|, 0)$ is subharmonic and $\log|h+1|$ is superharmonic, so we may find a harmonic function $h'$ such that
\[
\max(\log |f|, 0) \leq h' \leq \log|h+1|,
\]
which is quasibounded since
\[
\frac{h(z)}{h'(z)} \geq \frac{h(z)}{\log|h(z) + 1|} \rightarrow \infty \text{ as }|h'(z)| \rightarrow \infty.
\]
By the lemma, we may find bounded harmonic functions $h'_n$ such that $h'_n \nearrow h'$. Now let $h''_n$ be the smallest harmonic majorant to $e^{h'_n}$. Since this increasing sequence of harmonic functions is bounded from above by $h+1$, we have $h''_n \nearrow h''$, where $h''$ is harmonic by Harnack's theorem. We conclude that $h''$ is quasibounded, and therefore $f$ is as well. 
\end{remark}
The main benefit of Theorem~\ref{karaktar_l1} is that it allows us to consider the Dirichlet problem with unbounded boundary data, without any reference to harmonic measure on the circle. Indeed, it implies that for any quasibounded \(\varphi: \partial \D \rightarrow \C_\infty \), continuous and locally bounded outside a b-polar set, there exists a unique quasibounded harmonic function \(h\) defined on $\D$ with
\[\lim_{z \rightarrow \zeta \in \partial \D}h(z) = \varphi(\zeta) \quad\text{a.e.,}\] 
since b-polar sets are precisely those of zero arc length (see \cite{nilsson2}). 

As mentioned in the Introduction, this viewpoint extends to all domains with non-polar boundary. 
\begin{theorem}\label{dirichlet_endim}
 Let \(\Omega \subset \C_\infty\) be a domain with non-polar boundary and suppose that \(\varphi: \partial \Omega \rightarrow \R \cup \{-\infty, \infty\} \) is quasibounded, continuous and locally bounded outside $E_\varphi$. Then there exists a unique quasibounded harmonic function \(h\) defined on $\D$ such that 
\[\lim_{z \rightarrow \zeta \in \partial \Omega \setminus E_\varphi}h(z) = \varphi(\zeta)\quad\text{outside a b-polar set}\] 
if and only if $E_\varphi$ is b-polar.
\end{theorem}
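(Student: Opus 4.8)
The plan is to transfer the disk result, Theorem~\ref{karaktar_l1}, to an arbitrary domain $\Omega$ with non-polar boundary by replacing normalized arc length with harmonic measure $\omega = \omega_{z_0}$ at a fixed reference point $z_0 \in \Omega$ (by Harnack these measures are mutually absolutely continuous, so the choice is immaterial), together with the identification of b-polar subsets of $\bd\Omega$ with sets of harmonic measure zero. The first step is to establish the general form of the characterization: a harmonic function on $\Omega$ is quasibounded if and only if it is the harmonic-measure integral $z \mapsto \int_{\bd\Omega}\psi\,d\omega_z$ of some $\psi \in L^1(\omega)$, and correspondingly a boundary function is quasibounded if and only if it lies in $L^1(\omega)$. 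Since $\bd\Omega$ is non-polar, $\Omega$ carries a Green's function, so the least-harmonic-majorant construction and the Harnack argument underlying Lemma~\ref{karaktarlemma} go through verbatim, and the proof of Theorem~\ref{karaktar_l1} adapts with Poisson integrals replaced by harmonic-measure integrals.

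Granting this, I would prove the two implications as follows. For the direction assuming $E_\varphi$ is b-polar, hence $\omega(E_\varphi)=0$: since $\varphi$ is quasibounded it is finite $\omega$-almost everywhere, so its restriction to $\bd\Omega \setminus E_\varphi$, extended by $0$ across $E_\varphi$, is an $\omega$-integrable function $\psi$, and $h(z) := \int_{\bd\Omega}\psi\,d\omega_z$ is harmonic and quasibounded by the first step. At every regular boundary point $\zeta \notin E_\varphi$ at which $\varphi$ is continuous, the generalized (Perron--Wiener--Brelot) solution attains its boundary value, so $\lim_{z\to\zeta}h(z)=\varphi(\zeta)$; the local boundedness and continuity of $\varphi$ off $E_\varphi$ supply the needed regularity of the data, while the set of irregular points is polar and hence b-polar. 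Uniqueness is immediate: two quasibounded solutions have $L^1(\omega)$ boundary data that coincide $\omega$-almost everywhere---they agree off $E_\varphi$ outside a b-polar (null) set, and $\omega(E_\varphi)=0$---so the integral representation forces them to be equal.

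For the converse I argue by contraposition: if $E_\varphi$ is not b-polar, then $\omega(E_\varphi) > 0$, and uniqueness must fail. Since $\varphi$ is quasibounded it is still $\omega$-integrable, so a solution exists; but starting from one solution with boundary data $\psi$ and replacing $\psi$ on $E_\varphi$ by $\psi + g$ for a bounded $g$ with $\int_{E_\varphi} g\,d\omega_{z_0} \neq 0$ yields a different element of $L^1(\omega)$, hence---again by the first step---a genuinely different quasibounded harmonic function. As the modification lives on the harmonic-measure-null perturbation of $\bd\Omega\setminus E_\varphi$, the new function still realizes $\varphi$ off $E_\varphi$ away from a b-polar set, so there cannot be a unique solution, and the existence-and-uniqueness hypothesis forces $E_\varphi$ to be b-polar.

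The main obstacle is the boundary-limit analysis in the first implication. Three effects must be reconciled: irregular boundary points, the artificial modification of the data on $E_\varphi$, and the possibility that $E_\varphi$ clusters at points where $\varphi$ is continuous, so that $\psi$ fails to be continuous there. The resolution should combine the equivalence ``harmonic measure zero $=$ b-polar,'' the fact that irregular points form a polar set, and a localization estimate for the harmonic-measure integral showing that the contribution of the null set $E_\varphi$ to the boundary behavior at a point outside it is negligible. The same localization is what guarantees, in the converse, that perturbing the data on the positive-measure set $E_\varphi$ does not corrupt the boundary values elsewhere.
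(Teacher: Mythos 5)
Your skeleton is in fact the paper's: harmonic-measure representation, identification of b-polar boundary sets with $\omega$-null sets, and a perturbation supported on $E_\varphi$ for the converse. But the crucial boundary-limit step is both deferred and, as you state it, false. You extend $\varphi$ by $0$ across $E_\varphi$ and claim that at every regular point $\zeta \notin E_\varphi$ where $\varphi$ is continuous, the PWB solution of the modified data $\psi$ attains $\varphi(\zeta)$. A b-polar set can be dense in $\partial\Omega$---already a countable dense subset $\{\zeta_k\}$ of $\partial\D$ is b-polar, as $u(z)=\sum_k 2^{-k}\log(|z-\zeta_k|/2)$ shows---in which case your $\psi$ is discontinuous at \emph{every} point of $\partial\Omega\setminus E_\varphi$ where $\varphi\neq 0$, so the classical ``bounded data continuous at a regular point'' theorem does not apply; the ``localization estimate'' you invoke to repair this is named but never proved, and the pointwise conclusion at all regular continuity points is anyway stronger than the theorem asserts (limits are only claimed outside a b-polar exceptional set). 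The paper sidesteps this trap by never modifying $\varphi$ on $E_\varphi$: it solves for the truncations $\min(n,\max(\varphi,0))$ and $\max(-n,\min(\varphi,0))$, which remain continuous and bounded outside the b-polar set $E_\varphi$, invoking the b-polar modification of Ransford's Corollary~4.2.6 together with Theorem~4.3.3 (bounded harmonic functions with boundary limits off a b-polar set are exactly harmonic-measure integrals of such data), and then passes to monotone limits $h_+,h_-$ via Harnack, with quasiboundedness and monotone convergence keeping the limit representable.

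There are two further gaps. For uniqueness you assert that any quasibounded solution has $L^1(\omega)$ representing data agreeing $\omega$-a.e.\ with its pointwise boundary limits (``they agree off $E_\varphi$''); this Fatou-type identification is precisely what must be proved on a general domain, and the paper does so explicitly: take the least harmonic majorant $\tilde h$ of $\max(h,0)$, write $\tilde h$ as an increasing limit of bounded harmonic $h_n$ (Lemma~\ref{karaktarlemma}), dominate $h_n \leq \tilde h_n \leq \tilde h$ by the \emph{extended maximum principle}, where $\tilde h_n$ is the bounded solution with truncated boundary data, and conclude by monotone convergence. Your ``first step,'' that Lemma~\ref{karaktarlemma} and Theorem~\ref{karaktar_l1} transfer ``verbatim,'' silently contains this entire argument. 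Finally, your converse cites the equivalence ``b-polar $\iff$ $\omega$-null'' as known; the paper proves the direction it needs (not b-polar $\implies$ $\omega(E_\varphi)>0$) via the envelope formula of Theorem~4.3.3: if $\omega(z,E_\varphi)=0$ one extracts subharmonic $u_k$ with $u_k^*\leq -\chi_{E_\varphi}$ and $u_k(z)>-2^{-k}$, and $\tilde u=\sum_k u_k$ exhibits $E_\varphi$ as b-polar. Granting that, your perturbation (adding a harmonic function with data supported on $E_\varphi$) is exactly the paper's final step. So the route is the same, but the three analytic pivots---the truncate-don't-redefine device, the majorant/maximum-principle uniqueness argument, and the measure-zero-implies-b-polar construction---are missing or incorrectly shortcut in your write-up.
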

\begin{proof}
 Suppose $E_\varphi$ is b-polar. The proof of \cite[Corollary~4.2.6]{ransford}, slightly modified by replacing polar sets with b-polar sets, combined with \cite[Theorem~4.3.3]{ransford} implies that all bounded harmonic functions with boundary limits outside a b-polar set are representable as integrals with respect to harmonic measure. Hence, for any $n \in \N$, we may solve the Dirichlet problem uniquely for the boundary functions $\min(n,\max(\varphi, 0))$ and $\max(-n,\min(\varphi, 0))$. Letting $n \rightarrow \infty$, we acquire quasibounded harmonic functions $h_+, h_-$ from Harnack's theorem. Clearly, the sum $h_+ + h_-$ has the correct boundary behavior, and by monotone convergence, is represented by an integral.
 
 Now suppose that $h$ is another solution, and let $\tilde h$ be the smallest harmonic majorant to $\max(h,0)$. Since $\tilde h$ is quasibounded, there exist bounded harmonic functions such that $h_n \nearrow \tilde h$. Let $C_n$ be an increasing sequence of constants such that
 \[\limsup_{z \rightarrow \zeta \in \partial \Omega \setminus E_\varphi}h_n(z) \leq \min(C_n,\max(\varphi(\zeta), 0))\quad\text{outside a b-polar set,}\] 
and let $\tilde h_n$ denote the unique bounded harmonic function with the right-hand side as its boundary limits. By the extended maximum principle, $h_n \leq \tilde h_n \leq \tilde h$. Since $\tilde h_n$ is representable as an integral with respect to harmonic measure, we may reason as in the sufficiency part of the proof of Theorem~\ref{karaktar_l1} to conclude that $\tilde h$ and $h$ are also represented by integrals. The solution is therefore unique.

Conversely, suppose that $E_\varphi$ is not b-polar, and let $\chi_{E_\varphi}$ denote the indicator function on $E_\varphi$. Then $E_\varphi$ does not have zero harmonic measure, since for every $z \in \Omega, k\in \N$, the harmonic measure $\omega_\Omega$ satisfies
\begin{align*}
-\omega_\Omega(z, E_\varphi)&=\int_{\partial\Omega} - \chi_{E_\varphi}(\zeta) \ d\omega_\Omega(z, \zeta) \\
&= \sup\{u(z)\suchthat u \text{ subharmonic, }u^*\leq - \chi_{E_\varphi} \text{ on }\partial\Omega\}
\end{align*}
by \cite[Theorem~4.3.3]{ransford}. The right-hand side cannot be equal to zero, since otherwise we could find subharmonic functions $u_k$ with the properties that
\[
u_k^* \leq -\chi_{E_\varphi} \text{ on }\partial\Omega, 
 \quad u_k(z)>-\frac{1}{2^k}, 
\]
from which we may construct a negative subharmonic function
\[
\tilde u := \sum_{k=1}^\infty u_{k}
\]
with $\tilde u^* \leq -\infty $ on $E_\varphi$, implying that $E_\varphi$ is b-polar. Therefore, adding the Poisson integral of $\chi_{E_\varphi}$ to one solution produces another.
\end{proof}
\begin{remark}
Note that this theorem is the most general version of Corollary~4.2.6 in \cite{ransford}.
\end{remark}
Omitting the details, it is clear that one can use corresponding notions of potential theory in $\R^n$ to prove the following result, which we shall use later on. 
\begin{corollary}\label{dirichlet_flerdim}
 Let \(\Omega \subset \R^n\) be a regular domain and suppose that \(\varphi: \partial \Omega \rightarrow \R \cup \{-\infty, \infty\} \) is quasibounded, continuous and locally bounded outside $E_\varphi$. Then there exists a unique quasibounded harmonic function \(h\) such that 
\[\lim_{z \rightarrow \zeta \in \partial \Omega \setminus E_\varphi}h(z) = \varphi(\zeta)\] 
if and only if $E_\varphi$ is b-polar.
\end{corollary}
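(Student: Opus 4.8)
The plan is to transfer the entire argument of Theorem~\ref{dirichlet_endim} from the planar setting to $\R^n$, noting that every ingredient used there has a direct counterpart in classical potential theory on regular domains in $\R^n$. The key structural facts I would invoke are: (i) on a regular domain the Dirichlet problem for bounded, continuous boundary data is solvable via the Perron--Wiener--Brelot method; (ii) bounded harmonic functions with boundary limits outside a b-polar set are representable as Poisson integrals with respect to harmonic measure $\omega_\Omega$, which is the $\R^n$-analogue of the Ransford results (\cite[Corollary~4.2.6]{ransford} and \cite[Theorem~4.3.3]{ransford}) with polar sets replaced by b-polar sets; (iii) Harnack's theorem for monotone increasing sequences of harmonic functions holds verbatim in $\R^n$; and (iv) the characterization of quasiboundedness in Lemma~\ref{karaktarlemma}, whose proof uses only Harnack's theorem and elementary envelope constructions, carries over without change.

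For the sufficiency direction, assuming $E_\varphi$ is b-polar, I would first split $\varphi$ into its positive and negative parts and solve the Dirichlet problem for the truncations $\min(n,\max(\varphi,0))$ and $\max(-n,\min(\varphi,0))$, which are bounded, continuous, and locally bounded outside the b-polar set $E_\varphi$, hence uniquely solvable by (ii). Letting $n\to\infty$ and applying Harnack's theorem yields quasibounded harmonic functions $h_+$ and $h_-$; their sum has the prescribed boundary values off a b-polar set and, by monotone convergence, is represented by an integral with respect to $\omega_\Omega$. Uniqueness follows exactly as in Theorem~\ref{dirichlet_endim}: given another solution $h$, I pass to the least harmonic majorant $\tilde h$ of $\max(h,0)$, use quasiboundedness and Lemma~\ref{karaktarlemma} to obtain an increasing sequence $h_n\nearrow\tilde h$ of bounded harmonic functions, dominate them by representable functions $\tilde h_n$ via the extended maximum principle, and conclude representability of $\tilde h$ and $h$ by the envelope/monotone-convergence argument from the sufficiency part of Theorem~\ref{karaktar_l1}.

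For the necessity direction, assuming $E_\varphi$ is not b-polar, I would reproduce the harmonic-measure computation: the identity
\[
-\omega_\Omega(z,E_\varphi)=\sup\set{u(z)\suchthat u\text{ subharmonic},\ u^*\leq -\chi_{E_\varphi}\text{ on }\partial\Omega}
\]
holds on regular domains in $\R^n$ by the $\R^n$-version of \cite[Theorem~4.3.3]{ransford}. If $E_\varphi$ had zero harmonic measure, the supremum would vanish, allowing one to extract subharmonic functions $u_k$ with $u_k^*\leq-\chi_{E_\varphi}$ and $u_k(z)>-2^{-k}$, whose sum $\tilde u=\sum_k u_k$ is negative, subharmonic, and satisfies $\tilde u^*\leq-\infty$ on $E_\varphi$, forcing $E_\varphi$ to be b-polar---a contradiction. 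Hence $\omega_\Omega(z,E_\varphi)>0$, and adding the Poisson integral of $\chi_{E_\varphi}$ to any solution produces a genuinely different quasibounded solution with the same boundary limits off $E_\varphi$, destroying uniqueness.

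The main obstacle, and the reason the corollary is only asserted with details omitted, is verifying that the representation theory of step (ii)---the identification of bounded harmonic functions having boundary limits outside a b-polar set with Poisson integrals against harmonic measure---genuinely transfers to $\R^n$. In the plane this rests on Ransford's treatment, which is stated for subharmonic functions on the Riemann sphere; in higher dimensions one must appeal to the corresponding Perron--Wiener--Brelot and Fatou-type theory, where the equivalence between sets of zero harmonic measure and b-polar sets requires care. Once this representation is in place, every remaining step is a routine transcription of the planar proofs, since Harnack's theorem, the extended maximum principle, and the envelope characterization of quasiboundedness are all dimension-independent.
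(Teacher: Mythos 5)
Your proposal is correct and follows essentially the same route as the paper, which itself proves this corollary only by remarking that the argument of Theorem~\ref{dirichlet_endim} transfers verbatim once the planar tools (the Ransford-type representation of bounded harmonic functions with boundary limits off a b-polar set, Harnack's theorem, the extended maximum principle, and Lemma~\ref{karaktarlemma}) are replaced by their $\R^n$ counterparts. Your identification of step (ii) --- the Perron--Wiener--Brelot representation with b-polar exceptional sets in place of polar ones --- as the only point requiring genuine verification is exactly the detail the paper elects to omit, so nothing further is needed.
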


\section{Quasibounded plurisubharmonic functions}
For the higher dimensional case, we shall assume that the domain $\Omega \subset \C^n$ is B-regular, and restrict our attention to functions taking values in the extended reals. 
\begin{definition}
We say that a function $f:\Omega \rightarrow \R \cup \{-\infty, \infty\}$ is \textit{pluri-quasibounded} if there exists a positive plurisuperharmonic $v$ defined on $\Omega$ such that
    \[
      |f(z_0)|=+\infty \implies v(z_0)=+\infty \quad \text{and} \quad
      \frac{v(z)}{|f(z)|} \rightarrow \infty \text{ as }|f(z)| \rightarrow \infty.
    \]
Equivalently, for every $\varepsilon >0$, there exists $M_\varepsilon$ such that
\[
-\varepsilon v - M_\varepsilon \leq f \leq  \varepsilon v + M_\varepsilon,
\]
and we say that $v$ (and $-v$) quasibounds $f$ from above (resp. from below).
\end{definition}
\begin{remark}
A similar notion may be found in \cite{nilsson}. As shown in \cite[Lemma~2.1]{nilsson}, a third characterization is that we may find $\psi:[0,+\infty] \rightarrow[0,+\infty]$ satisfying
\[
\lim_{t\rightarrow+\infty}\frac{\psi(t)}{t}=0,
\]
such that $|f|\leq\psi(v)$ for some positive plurisuperharmonic function $v$ defined on $\Omega$. Although this is closer to Parreau's original theory, its main utility is that it provides many examples of pluri-quasibounded functions, and we shall not make use of it in our arguments. A fourth equivalent condition, for plurisubharmonic functions bounded from below, is that there exists an increasing sequence of bounded plurisubharmonic functions $u_n\leq u$ such that $(u_n-u)^*$ is plurisubharmonic and $u_n-u \nearrow 0$ outside a pluripolar set \cite[Theorem~3.3]{nilsson}. Recall that
\[
u^*(z) := \limsup_{\Omega \ni w\rightarrow z} u(w) \qquad u_*(z) := \liminf_{\Omega \ni w\rightarrow z} u(w)
\]
denote the upper (resp. lower) semicontinuous regularization of $u$.
\end{remark}
 
 As in the single variable case, we say that $\varphi: \partial \Omega \rightarrow \R \cup \{-\infty, \infty\}$ is pluri-quasibounded if there exists a positive plurisuperharmonic function  $v$ with the property that for all $\varepsilon >0$, there exists $M_\varepsilon$ such that
 \[
 |\varphi(\zeta)|   \leq \liminf_{z \rightarrow \zeta} \varepsilon v(z) + M_\varepsilon, \quad \forall \zeta \in \partial\Omega.
\]


 In order to arrive at a pluricomplex counterpart to Theorem~\ref{dirichlet_endim}, we will need some results that ensure that all complex Monge--Ampère measures we come across are well-defined. Effectively, this will be a matter of showing that all negative plurisubharmonic functions we invoke may be replaced by functions in the Cegrell class $\mathcal{E}(\Omega)$, which requires that $\Omega$ is at least hyperconvex. The class $\mathcal{E}(\Omega)$ is characterized by the following property \cite[Theorem~4.2]{cegrell2}: It is the largest class for which the complex Monge--Ampère operator $u \mapsto (dd^cu)^n$ is well-behaved, in the sense that any other $\mathcal{K}(\Omega) \subset \psh^-(\Omega)$ such that
\begin{enumerate}[label=(\roman*)]
    \item $u \in \mathcal{K}(\Omega), v\in\psh^-(\Omega) \implies \max(u,v)\in \mathcal{K}(\Omega)$,
    \item If $u \in \mathcal{K}(\Omega), v_j\in\psh^-(\Omega)\cap L^\infty(\Omega)$ such that $v_j \searrow u$,  then $\big((dd^cv_j)^n\big)_{j=1}^\infty$ is weak*-convergent
\end{enumerate}
necessarily satisfies $\mathcal{K}(\Omega) \subset \mathcal{E}(\Omega)$. More generally, it is known that the complex Monge--Ampère operator is continuous with respect to monotone sequences in $\mathcal{E}(\Omega)$. Concretely, $u \in \mathcal{E}(\Omega)$ if for each $z_0 \in \Omega$, there exists a neighborhood $U_{z_0} \subset \Omega$ and decreasing sequence of bounded functions $h_j$ such that $\lim_{z \rightarrow \zeta_0 \in \partial \Omega}h_j(z)=0,$ 
\[
 \sup_j \int (dd^c h_j)^n < \infty,
\]
and $h_j \searrow u$ on $U_{z_0}$. If we may choose $U_{z_0} = \Omega$, then $u \in \mathcal{F}(\Omega)$; this is a subclass of $\mathcal{E}(\Omega)$ enjoying many additional properties, which we will exploit in the proofs below. Finally, let $\mathcal{K}^a(\Omega)$ denote the subset of functions in $\mathcal{K}(\Omega)\subset\mathcal{E}(\Omega)$ whose Monge--Ampère measure does not charge pluripolar sets.  

We are now ready to state and prove the following two lemmata.

\begin{lemma}\label{Equasi}
Suppose that $u \in \mathcal{E}^a(\Omega)$ is quasibounded by $v$ from below. Then there exists $\tilde v \in \mathcal{E}(\Omega)$ that quasibounds $u$ from below.
\end{lemma}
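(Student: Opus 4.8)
The plan is to unwind the definition into a usable family of inequalities and then reduce the lemma to producing a single, sufficiently singular element of $\mathcal{E}(\Omega)$. Taking $\eps = 1/k$ in the definition of quasiboundedness from below, the hypothesis says there is a positive plurisuperharmonic $v$ and an increasing sequence $M_k$ with
\[
u \ge -\tfrac1k v - M_k \quad \text{on } \Omega, \qquad k = 1, 2, \dots,
\]
so that $w := -v \in \psh^-(\Omega)$ already quasibounds $u$ from below; the only defect is that $w$ need not lie in $\mathcal{E}(\Omega)$, since its Monge--Ampère mass may be infinite. Thus it suffices to construct $\tilde v \in \mathcal{E}(\Omega)$ that is more singular than $u$ in the precise sense that $u \ge \eps\tilde v - M'_\eps$ for every $\eps > 0$. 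I want to emphasize that this is the genuinely hard direction: by property (i), $\mathcal{E}(\Omega)$ is stable under replacing a function by a \emph{larger} negative plurisubharmonic one, whereas a function quasibounding $u$ from below must be smaller, hence more singular, than $u$. It is exactly here that the hypothesis enters, for an arbitrary $u \in \mathcal{E}^a(\Omega)$ need not admit any strictly more singular $\mathcal{E}$-minorant; the quasibound by $v$ caps how wild the singularities of $u$ can be.

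I would build $\tilde v$ directly from the Monge--Ampère measure of $u$. Since $u \in \mathcal{E}^a(\Omega)$, the measure $\mu := (dd^c u)^n$ puts no mass on the pluripolar set $\{u = -\infty\}$. Passing to a slightly smaller hyperconvex subdomain $U \Subset \Omega$ if necessary and using that $\Omega$ is B-regular, I would place the construction in $\mathcal{F}^a$ and choose a \emph{finite} measure $\nu \ge \mu$ obtained by amplifying $\mu$ near the singularities of $u$, then solve $(dd^c \tilde v)^n = \nu$ with $\tilde v \in \mathcal{F}^a \subset \mathcal{E}(\Omega)$ via Cegrell's existence theorem. Membership of $\tilde v$ in $\mathcal{E}(\Omega)$ is then automatic, and the comparison principle in Cegrell classes forces $\tilde v \le u$, so that $\tilde v$ is at least as singular as $u$. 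The slack furnished by $v$---namely that the singularities of $u$ are dominated by an arbitrarily small multiple of $v$---is what allows $\nu$ to be amplified enough to make $\tilde v$ strictly more singular at the required rate while keeping $\nu(\Omega)$ finite, so that $\tilde v$ stays in the Cegrell class.

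The step I expect to be the main obstacle is precisely this calibration: converting the qualitative bounds $u \ge -\tfrac1k v - M_k$ into a quantitative prescription for $\nu$ that guarantees the limiting inequality $u \ge \eps\tilde v - M'_\eps$ for all $\eps$. Concretely, one must control how the singularity rate of the solution depends on the added mass, which I would handle through the comparison principle together with the monotonicity of the Monge--Ampère operator along the truncations $\max(u, -k) \searrow u$, using $v$ to dominate the defect at each level. Once the inequalities are secured for a fixed sequence $\eps_k \downarrow 0$, interpolating in $\eps$ upgrades them to all $\eps > 0$, and the quasibounding property of $\tilde v$ follows; the local pieces are then glued back using the local nature of the defining condition for $\mathcal{E}(\Omega)$.
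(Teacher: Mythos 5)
Your overall framing is right---the hard point is producing an element of $\mathcal{E}(\Omega)$ that is \emph{more} singular than $u$ at every rate---but the construction you propose has a genuine gap, in two places. First, the measure $\mu = (dd^c u)^n$ of a function in $\mathcal{E}^a(\Omega)$ need not have finite total mass (finite mass is what distinguishes $\mathcal{F}(\Omega)$ from $\mathcal{E}(\Omega)$), so when $\mu(\Omega) = \infty$ there is no finite measure $\nu \ge \mu$ at all, and your fallback of passing to $U \Subset \Omega$ abandons exactly what the lemma requires: the quasibound must hold on all of $\Omega$, and the singularities of $u$ may cluster at $\partial\Omega$, so a minorant built on a precompact subdomain controls nothing there; you gesture at gluing local pieces, but gluing is precisely where the work lies and no mechanism is given. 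Second, and more fundamentally, the calibration step you flag as ``the main obstacle'' does not merely lack details---the mechanism is absent. Uniform amplification $\nu = c\,\mu$ just yields (for $u \in \mathcal{F}$) the solution $c^{1/n}u$, a fixed constant multiple of $u$, which never quasibounds $u$ from below since the ratio must tend to infinity, not to a constant. Non-uniform amplification concentrated ``near the singularities'' cannot help either: for $u \in \mathcal{E}^a(\Omega)$ the set $\{u = -\infty\}$ is pluripolar and carries no $\mu$-mass, and prescribing a measure that vanishes on pluripolar sets does not control the pointwise rate of blow-up of the solution on such sets. There is no known way to translate ``$\tilde v/u \to \infty$ along $u \to -\infty$'' into a condition on $(dd^c\tilde v)^n$, so the inverse-problem route stalls exactly where you predicted.

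The paper avoids prescribing any measure. It builds the minorant explicitly as an infinite sum of maxima,
\[
\tilde v := \sum_{l=1}^\infty \max\Bigl(\tfrac{1}{2^{k_l}}v,\, u\Bigr),
\]
with $v$ the (negative of the) quasibounding function from the hypothesis. Each summand is asymptotically equal to $u$ along any sequence with $u(z_j) \to -\infty$ (since $v(z_j)/|u(z_j)| \to \infty$ forces the max to equal $u$ eventually), so the $N$-term partial sums behave like $N \cdot u$ near deep singularities and the ratio condition is immediate---the quasibounding property costs nothing. All the effort then goes into showing $k_l$ can be chosen so that $\tilde v \in \mathcal{E}(\Omega)$: one replaces each summand by an $\mathcal{F}^a(\Omega)$ function agreeing with it on a term of an exhaustion $\Omega_l \Subset \Omega$, uses balayage (this is where the hypothesis $u \in \mathcal{E}^a(\Omega)$ enters essentially) to kill the Monge--Amp\`ere mass off $\bar\Omega_l$, chooses $k_l$ so that $\int_\Omega (dd^c v_l)^n \le 2^{-l}$, and then sums: Cegrell's Minkowski-type inequality bounds the mass of the tail sums uniformly, Czy\.z's convergence lemma puts the tail in $\mathcal{F}(\Omega)$, and locality of $\mathcal{E}(\Omega)$ finishes. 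In short: where you try to manufacture singularity by solving an equation, the paper manufactures it by superposition of truncated copies of $u$ itself, and the only analytic work is the mass bookkeeping needed for membership in $\mathcal{E}(\Omega)$.
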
 
\begin{proof}
Let
\[
\tilde v := \sum_{l=1}^\infty \max\big(\frac{1}{2^{k_l}}v, u\big),
\]
and let $(z_j)$ be a sequence such that $u(z_j) \rightarrow -\infty$. Then $\frac{v(z_j)}{u(z_j)} \rightarrow \infty$, so 
\[
\liminf_{j} \frac{\max(\frac{1}{2^{k_l}}v(z_j), u(z_j))}{u(z_j)} \geq 1, 
\]
which implies that $\tilde v$ quasibounds $u$. It remains to show that we may choose $k_l$ so that $\tilde v \in \mathcal{E}(\Omega)$. Since $\max(\frac{1}{2^{k_l}}v, u) \in \mathcal{E}^a(\Omega)$, by \cite[last remark on p.~166]{cegrell2}, for each open set $\Omega' \Subset \Omega$ there exists an element in $\mathcal{F}(\Omega)$ coinciding with $\max(\frac{1}{2^{k_l}}v, u)$ on $\Omega'$. Therefore, for subdomains $\Omega_{1}\Subset\Omega_2\Subset \dots\subset \Omega$ such that $\cup_l \Omega_l = \Omega$, the functions
\[
v_l(z) := \sup\{u(z) \in \psh^-(\Omega) \suchthat u \leq \max(\frac{1}{2^{k_l}}v, u) \text{ on }\Omega_l\}^*
\]
belongs to $\mathcal{F}^a(\Omega)$,  and by balayage (here we use that $u\in \mathcal{E}^a(\Omega)$, see \cite[Theorem~2.7]{ahag}), $(dd^cv_l)^n = 0$ on $\Omega \setminus \bar \Omega_l$. Clearly, as $k_l \rightarrow \infty$, $v_l \nearrow 0$ outside a pluripolar set, so by the fact that the complex Monge--Ampère operator is continuous on increasing sequences in $\mathcal{E}(\Omega)$, $(dd^cv_l)^n$ tends weakly to $0$. This permits us to choose $k_l$ such that
\[
\int_\Omega (dd^c v_l)^n \leq \frac{1}{2^l}.
\]
Now let $\Omega' \Subset \Omega$ be an arbitrary open set. By compactness, there exists $l_0$ such that $\Omega' \subset \Omega_{l_0}$, and using \cite[Corollary 5.6]{cegrell2}, we have that
\[
\Big(\int_\Omega \big(dd^c \sum_{l=l_0}^N v_{l}\big)^n\Big)^{1/n} \leq \sum_{l=l_0}^N \Big(\int_\Omega (dd^c v_{l})^n\Big)^{1/n} \leq \sum_{l=l_0}^N \frac{1}{2^{l/n}} < \frac{1}{1-\frac{1}{2^{1/n}}}.
\]
By \cite[Lemma~2.1]{czyz}, the partial sums converge to an element in $\mathcal{F}(\Omega)$. But $\mathcal{E}(\Omega)$ is a cone, so
\[
\sum_{l=1}^{l_0-1} \max(\frac{1}{2^{k_l}}v, u)+\sum_{l=l_0}^\infty v_{l} \in \mathcal{E}(\Omega),
\]
coinciding with $\tilde v$ on $\Omega'$. This shows that $\tilde v \in \mathcal{E}(\Omega)$. 
\end{proof}
\begin{remark}
A result of Rashkovskii (see \cite[Corollary~3.4]{rashkovskii} and \cite[Corollary~3.5]{nilsson}) shows that every element in $\mathcal{E}^a(\Omega)$ is locally pluri-quasibounded, in the sense that their restriction to any hyperconvex subdomain $\Omega'\Subset \Omega$ is pluri-quasibounded on $\Omega'$.
\end{remark}
With a similar proof, we also obtain
\begin{lemma}\label{bpluripolar}
Suppose $A \subset \partial \Omega$ is b-pluripolar. Then there exists $u \in \mathcal{E}(\Omega)$ such that $\limsup_{z\rightarrow A} u(z) = -\infty$.
\end{lemma}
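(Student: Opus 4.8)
The plan is to mirror the construction in Lemma~\ref{Equasi}: to realize the desired function as a convergent series of bounded pieces whose Monge--Ampère masses are summable in the sense of Cegrell's inequality. Since $A$ is b-pluripolar, we start from a function $w\in\psh^-(\Omega)$, not identically $-\infty$, with $\limsup_{\Omega\ni z\to A}w(z)=-\infty$; the point is that $w$ itself need not lie in $\mathcal{E}(\Omega)$, its global Monge--Ampère mass being uncontrolled. One should note at the outset that the balayage-onto-subdomains trick of Lemma~\ref{Equasi} is \emph{not} available here: sweeping $w$ out onto an interior exhausting family $\Omega_l\Subset\Omega$ would replace it, near the boundary, by a bounded maximal function and thereby destroy precisely the blow-up along $A\subset\bd\Omega$ that we wish to retain. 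The pieces must therefore keep the boundary behaviour of $w$ intact.

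To this end I would set, for $j\in\N$, $U_j:=\set{z\in\Omega\suchthat w(z)<-j}$ and let $h_j:=\bigl(\sup\set{\phi\in\psh^-(\Omega)\suchthat \phi\le-1 \text{ on }U_j}\bigr)^*$ be the relative extremal function, so that $h_j\in[-1,0]$, $h_j\equiv-1$ on $U_j$, and $(dd^ch_j)^n=0$ off $\overline{U_j}$. Being bounded, each $h_j$ lies in $\mathcal{E}(\Omega)$. The competitor $\tfrac1jw$ shows $\tfrac1jw\le h_j\le0$, whence $h_j\nearrow0$ outside the pluripolar set $\set{w=-\infty}$; by continuity of the complex Monge--Ampère operator along increasing sequences this forces $\int_\Omega(dd^ch_j)^n\to0$ (granting $h_j\in\mathcal{F}(\Omega)$, which I address below). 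Along a subsequence $(j_k)$ chosen so that $\sum_k\bigl(\int_\Omega(dd^ch_{j_k})^n\bigr)^{1/n}<\infty$, Cegrell's inequality \cite[Corollary~5.6]{cegrell2} bounds the masses of the partial sums uniformly, and \cite[Lemma~2.1]{czyz} then yields $u:=\sum_k h_{j_k}\in\mathcal{F}(\Omega)\subset\mathcal{E}(\Omega)$. Finally $u$ has the required singularity: for $\zeta\in A$ and any $N$, each of $U_{j_1},\dots,U_{j_N}$ contains a neighbourhood of $\zeta$ in $\Omega$, so $h_{j_k}\equiv-1$ there for $k\le N$ and $u\le-N$ near $\zeta$; hence $\limsup_{z\to A}u(z)=-\infty$.

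The step I expect to be the main obstacle is the \emph{finiteness} of the masses $\int_\Omega(dd^ch_j)^n$, equivalently $\operatorname{Cap}(U_j,\Omega)<\infty$, which is needed before the decay argument and \cite[Lemma~2.1]{czyz} can be invoked. The difficulty is genuine because $U_j$ reaches the boundary, and for $n\ge2$ the relative capacity of sets exhausting $\Omega$ is infinite. What saves us is that the boundary trace $\overline{U_j}\cap\bd\Omega$ is contained in the set on which $w$ tends to $-\infty$, which is b-pluripolar, so that this contact is too thin to carry escaping mass. I would make this precise by exhausting $U_j$ from within by the relatively compact sets $K_{j,m}:=U_j\cap\set{\rho\le-1/m}$, where $\rho\in\mathcal{E}_0(\Omega)$ is the continuous exhaustion furnished by B-regularity, observing that $h_j^{(m)}:=\bigl(\sup\set{\phi\in\psh^-(\Omega)\suchthat\phi\le-1\text{ on }K_{j,m}}\bigr)^*\in\mathcal{E}_0(\Omega)$ decreases to $h_j$, and bounding $\sup_m\operatorname{Cap}(K_{j,m},\Omega)$ by comparing $h_j^{(m)}$ with $\tfrac1jw$ through the comparison principle of \cite{ahag}. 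Establishing this uniform capacity bound along the b-pluripolar boundary trace is the crux; once it is in hand, the remaining steps are routine applications of the monotone continuity of the Monge--Ampère operator together with the summation lemmata already used in Lemma~\ref{Equasi}.
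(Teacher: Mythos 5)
Your construction produces the right singular function, but the decisive step---membership in $\mathcal{E}(\Omega)$---is left open at exactly the point where the lemma has its content, and the remedy you sketch cannot close it. Two things go wrong. First, the inference ``$h_j\nearrow 0$ outside a pluripolar set, hence $\int_\Omega (dd^ch_j)^n\to 0$'' is invalid: continuity of the Monge--Amp\`ere operator along increasing sequences gives only weak$^*$ convergence against \emph{compactly supported} test functions, while the measures $(dd^ch_j)^n$ sit on $\bd U_j\cap\Omega$, which clusters at $\bd\Omega$ near $A$; mass may escape to the boundary, and nothing in your argument rules this out (you yourself observe that $\operatorname{Cap}(U_j,\Omega)$ may a priori be infinite, which would simultaneously block $h_j\in\mathcal{F}(\Omega)$ and the appeal to \cite[Lemma~2.1]{czyz}). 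Second, the proposed repair---bounding $\sup_m\operatorname{Cap}(K_{j,m},\Omega)$ by comparing $h_j^{(m)}$ with $\tfrac1j w$ via a comparison principle---is circular: any such comparison bounds $\int(dd^ch_j^{(m)})^n$ by $j^{-n}$ times the total Monge--Amp\`ere mass of $w$, and $w$ is an arbitrary negative plurisubharmonic witness whose mass is undefined or uncontrolled, exactly as you concede in your opening sentence. So the ``crux'' you flag is a genuine gap, not a routine verification.

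The device you explicitly reject is precisely what repairs this. Your reason for discarding the balayage-onto-subdomains trick rests on a misreading of Lemma~\ref{Equasi}: the paper never sweeps the function that carries the singularity. Its proof of the present lemma takes bounded $u_k\in\psh^-(\Omega)$ with $u_k\le-1$ on $A$ and $u_k(z_0)\ge-2^{-k}$ (available because b-pluripolarity forces $\omega^*(\cdot,A,\Omega)\equiv 0$ by \cite[Proposition~3.5]{djire}), and sets $u=\sum_l u_{k_l}$, which visibly satisfies $\limsup_{z\to A}u=-\infty$; this $u$ is never modified. Balayage enters only to verify $u\in\mathcal{E}(\Omega)$, which is a \emph{local} property: for fixed $\Omega'\Subset\Omega_l\Subset\Omega$ one replaces each term by $v_l=\sup\set{\phi\in\psh^-(\Omega)\suchthat \phi\le u_{k_l}\text{ on }\Omega_l}^*$, which coincides with $u_{k_l}$ on $\Omega_l$ and has $(dd^cv_l)^n$ supported in $\widebar\Omega_l$. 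That confinement of the mass to a fixed compact set is exactly what upgrades weak$^*$ convergence to total-mass smallness, permitting the choice $\int_\Omega(dd^cv_l)^n\le 2^{-l}$ and the summation via \cite[Corollary~5.6]{cegrell2} and \cite[Lemma~2.1]{czyz}; that the auxiliary functions lose the boundary blow-up is harmless, since they are only required to agree with $u$ on $\Omega'$. Your $h_j$ would serve perfectly well as the input $u_{k_l}$ (each is a competitor for $\omega^*(\cdot,A,\Omega)$, being $\le -1$ near every point of $A$); running the localization on them closes your argument, whereas as written it does not.
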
 
\begin{proof}
Since $A$ is b-pluripolar, the boundary relative extremal function
\[
\omega^*(z,A,\Omega) := \big(\sup\{u(x) \suchthat u\in \psh^-(\Omega), u \leq -1 \text{ on }A\}\big)^*
\]
satisfies $\omega^*(z,A, \Omega) \equiv 0$ \cite[Proposition~3.5]{djire}. Hence we may find bounded plurisubharmonic functions $u_{k_j}\nearrow 0$ outside a pluripolar set such that $u_{k_j}|_A = -1$ and $u_{k_j}(z_0) \geq \frac{1}{2^{k_j}}$ at some point $z_0 \in \Omega$. By Harnack's theorem,
\[
u := \sum_{l=1}^\infty u_{k_l}
\]
converges to a plurisubharmonic function with $\limsup_{z\rightarrow A} u(z) = -\infty$. Exchanging $\max(\frac{1}{2^{k_l}}v, u)$ for $u_{k_l}$ in the proof of Lemma~\ref{Equasi}, the conclusion follows. 
\end{proof}
We will need two additional results concerning plurisubharmonic functions in $\mathcal{E}(\Omega)$. The following lemma is a weakened version of the comparison principle found in \cite{ahag}, on which our main result will depend in an essential way.
\begin{lemma}\label{jamfor}
Fix $v \in \mathcal{E}(\Omega)$, and suppose that $u$ is a plurisubharmonic function satisfying
\[
 v \geq u \geq \psi_u + v, 
\]
where $\psi_u\in \mathcal{F}(\Omega)$.  Then
\[
(dd^cu)^n \leq (dd^cv)^n \implies u = v.
\]
\end{lemma}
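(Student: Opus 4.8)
The plan is to derive the conclusion from the comparison principle of \cite{ahag}, whose hypotheses the conditions $v \ge u \ge \psi_u + v$ with $\psi_u \in \mathcal{F}(\Omega)$ are tailor-made to meet. Since the hypothesis already gives $u \le v$, it suffices to establish the reverse inequality $u \ge v$; the two together force $u = v$. Thus the whole argument reduces to verifying that the comparison principle applies and delivers $u \ge v$.

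First I would confirm that $u \in \mathcal{E}(\Omega)$, so that $(dd^c u)^n$ is a legitimately defined measure and comparison is even meaningful. This is read off the sandwich $\psi_u + v \le u \le v \le 0$: as $\mathcal{E}(\Omega)$ is a convex cone and both $\psi_u \in \mathcal{F}(\Omega) \subset \mathcal{E}(\Omega)$ and $v \in \mathcal{E}(\Omega)$, the minorant $\psi_u + v$ lies in $\mathcal{E}(\Omega)$; and since $\mathcal{E}(\Omega)$ is stable under passing to larger negative plurisubharmonic functions, the plurisubharmonic $u$ caught between $\psi_u + v \in \mathcal{E}(\Omega)$ and $0$ belongs to $\mathcal{E}(\Omega)$ as well.

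Second, I would interpret the lower bound $u \ge \psi_u + v$ as the boundary hypothesis of the comparison principle. The defect satisfies $u - v \ge \psi_u$ with $\psi_u \in \mathcal{F}(\Omega)$, i.e. $u$ and $v$ share the same generalized boundary values and $u$ dominates $v$ ``on $\partial \Omega$.'' This is precisely the situation in which $(dd^c u)^n \le (dd^c v)^n$ forces $u$ to be the larger function: smaller Monge--Ampère mass together with matching boundary data yields $u \ge v$ throughout $\Omega$. Invoking \cite{ahag} in this form gives $u \ge v$, and combined with the standing inequality $u \le v$ we obtain $u = v$.

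The crux is the second step: that the purely algebraic bound $u \ge \psi_u + v$ is exactly the right boundary condition, and that $\psi_u$ must lie in $\mathcal{F}(\Omega)$ rather than merely in $\mathcal{E}(\Omega)$, since only membership in $\mathcal{F}(\Omega)$ guarantees the control of boundary mass on which the comparison principle rests. This is also the point at which the version of the comparison principle in \cite{ahag}---valid for possibly singular $(dd^c v)^n$---is needed in place of the classical Bedford--Taylor comparison, as $(dd^c v)^n$ may charge a pluripolar set.
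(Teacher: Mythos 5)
Your proposal is correct and follows essentially the same route as the paper: the paper's proof is likewise a direct appeal to the comparison principle of \cite[Theorem~3.5]{ahag}, reading the sandwich $v \ge u \ge \psi_u + v$ with $\psi_u \in \mathcal{F}(\Omega)$ as saying $u, v \in \mathcal{F}(v) \subset \mathcal{N}(v)$, i.e.\ your ``matching boundary data'' step. The only hypothesis you leave tacit is the relative-singularity condition $v \succeq u$ required by that theorem, but as the paper notes this is immediate from $u \le v$, so nothing is missing in substance.
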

\begin{proof}
This follows directly from \cite[Theorem~3.5]{ahag}, since $u, v \in \mathcal{F}(v) \subset \mathcal{N}(v)$ (see \cite[Section~2]{ahag}), and $v \geq u \implies v \succeq u$ (for all $K \Subset \Omega$, there exists $C_k$ such that $v + C_k \geq u$ on $K$).
\end{proof}
\begin{remark}
    With the additional assumption that $u\in \mathcal{E}^a(\Omega)$,  which is sufficient for our purposes, this result may also be proved using \cite[Corollary 3.2]{ahag2}. 
\end{remark}
We shall also need the following result from \cite{hiep}.
\begin{lemma}\label{max}
Let $\mu$ be a positive measure which vanishes on all pluripolar subsets of $\Omega$. Suppose $u,v \in \mathcal{E}(\Omega)$ are such that $(dd^c u)^n \geq \mu,(dd^c v)^n \geq \mu$. Then $(dd^c \max(u, v))^n \geq \mu$.
\end{lemma}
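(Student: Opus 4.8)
The plan is to verify the inequality $(dd^c\max(u,v))^n \geq \mu$ separately on the two open sets $\{u>v\}$ and $\{v>u\}$ and on the contact set $\{u=v\}$, which together exhaust $\Omega$. Since the complex Monge--Ampère operator is local, on $\{u>v\}$ we have $\max(u,v)=u$ in a neighborhood of each point, so that $(dd^c\max(u,v))^n = (dd^cu)^n \geq \mu$ there; the symmetric argument handles $\{v>u\}$. Hence the asserted inequality already holds on $\{u\neq v\}$, and everything reduces to showing the restricted estimate $\mathbf{1}_{\{u=v\}}(dd^c\max(u,v))^n \geq \mathbf{1}_{\{u=v\}}\mu$.

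To treat the contact set I would first discard the pluripolar set $P := \{u=-\infty\}\cup\{v=-\infty\}$: because $\mu$ vanishes on pluripolar sets, $\mu(P)=0$, so it suffices to establish the estimate on $\{u=v\}\setminus P$. There I would pass to the bounded setting via the truncations $u_j:=\max(u,-j)$ and $v_j:=\max(v,-j)$, which are bounded plurisubharmonic and decrease to $u$ and $v$; by continuity of the operator along decreasing sequences in $\mathcal{E}(\Omega)$, the measures $(dd^c\max(u_j,v_j))^n$ converge weak${}^*$ to $(dd^c\max(u,v))^n$. For bounded functions one has the classical inequality
\[
(dd^c\max(u_j,v_j))^n \geq \mathbf{1}_{\{u_j>v_j\}}(dd^cu_j)^n + \mathbf{1}_{\{u_j\leq v_j\}}(dd^cv_j)^n,
\]
and, interchanging the roles of $u_j$ and $v_j$, also the symmetric version. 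On the locus $\{u_j=v_j\}$ these combine to give $\mathbf{1}_{\{u_j=v_j\}}(dd^c\max(u_j,v_j))^n \geq \mathbf{1}_{\{u_j=v_j\}}\max\big((dd^cu_j)^n,(dd^cv_j)^n\big)$. The crucial point is then that on the region $\{u>-j\}\cap\{v>-j\}$ the truncations coincide with $u$ and $v$, so by locality $(dd^cu_j)^n=(dd^cu)^n\geq\mu$ and $(dd^cv_j)^n=(dd^cv)^n\geq\mu$ there; since this region increases to $\Omega\setminus P$, we recover $\mathbf{1}_{\{u=v\}}(dd^c\max(u,v))^n \geq \mathbf{1}_{\{u=v\}}\mu$ on $\{u=v\}\setminus P$.

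The main obstacle I anticipate is the passage to the limit on the contact set, where the indicator sets $\{u_j=v_j\}$ move with $j$ and where finiteness of $u,v$ does not by itself supply the local boundedness needed for the bounded max-inequality. The resolution is precisely to localize to $\{u>-j\}\cap\{v>-j\}$, where the truncation is inactive and the bounded theory applies verbatim, and to absorb the remaining discrepancy—concentrated near $P$—into the non-charging hypothesis $\mu(P)=0$. Combining the contributions from $\{u>v\}$, $\{v>u\}$, and $\{u=v\}$ then yields $(dd^c\max(u,v))^n \geq \mu$ on all of $\Omega$.
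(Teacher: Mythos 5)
The paper does not actually prove this lemma---it is quoted verbatim from the cited work of Van Khue--Hiep \cite{hiep}---so your proposal must stand on its own, and as written it has two genuine gaps. The first is your use of ``locality'' on the sets $\{u>v\}$, $\{v>u\}$, and $\{u>-j\}\cap\{v>-j\}$: plurisubharmonic functions are merely upper semicontinuous, so none of these sets is open in general, and the claim that $\max(u,v)=u$ ``in a neighborhood of each point'' of $\{u>v\}$ is false (take $v\equiv 0$ and $u$ a plurisubharmonic function with a dense set of poles; every neighborhood of a point where $u>0$ meets $\{u=-\infty\}\subset\{u<v\}$). The identity $\mathbf{1}_{\{u>v\}}(dd^c\max(u,v))^n=\mathbf{1}_{\{u>v\}}(dd^cu)^n$ is true, but it is a substantially deeper fact---locality of the Monge--Amp\`ere operator in the \emph{plurifine} topology (Bedford--Taylor for bounded functions, extended to $\mathcal{E}(\Omega)$ essentially in \cite{hiep} itself)---not an instance of ordinary locality with respect to open sets. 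The same objection applies to ``$(dd^cu_j)^n=(dd^cu)^n$ on $\{u>-j\}$''; what is available as a citable standard fact is the truncation identity $\mathbf{1}_{\{u>-j\}}(dd^c\max(u,-j))^n=\mathbf{1}_{\{u>-j\}}(dd^cu)^n$ for $u\in\mathcal{E}(\Omega)$ (see \cite{ahag2}). The second gap is the limit passage on the contact set, which you flag as the main obstacle but do not actually repair: weak$^*$ convergence of $(dd^c\max(u_j,v_j))^n$ gives information only against continuous test functions, so it cannot be restricted to the Borel sets $\{u_j=v_j\}$ (which moreover move with $j$), and localizing to the non-open set $\{u>-j\}\cap\{v>-j\}$ does not make such a restriction legitimate.

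The good news is that your own displayed inequality renders the entire three-set decomposition, and any contact-set analysis, unnecessary. Since $\{u_j>v_j\}\subset\{u>-j\}$ and $\{u_j\le v_j\}\cap\{v\le-j\}\subset\{\max(u,v)\le-j\}$, the partition form of Demailly's inequality for the bounded functions $u_j,v_j$, combined with the truncation identity above, gives the \emph{global} bound
\[
(dd^c\max(u_j,v_j))^n \;\geq\; \mathbf{1}_{\{u_j>v_j\}}(dd^cu_j)^n+\mathbf{1}_{\{u_j\le v_j\}}(dd^cv_j)^n \;\geq\; \mathbf{1}_{\{\max(u,v)>-j\}}\,\mu .
\]
Now $\max(u_j,v_j)=\max(u,v,-j)\searrow\max(u,v)$, so the left-hand side converges weak$^*$ to $(dd^c\max(u,v))^n$ by the defining property of $\mathcal{E}(\Omega)$, while the right-hand side increases to $\mathbf{1}_{\{\max(u,v)>-\infty\}}\,\mu=\mu$, because $\{\max(u,v)=-\infty\}=\{u=-\infty\}\cap\{v=-\infty\}$ is pluripolar and $\mu$ does not charge it. Testing against a fixed nonnegative continuous function with compact support and using monotone convergence on the right then yields $(dd^c\max(u,v))^n\geq\mu$ with no restriction of the limit measure to any exceptional set. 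In short: your ingredients (truncation, the partition max-inequality, non-charging of pluripolar sets) are the right ones, but the argument only becomes correct once the spurious locality claims are replaced by the truncation identity and the limit is taken globally rather than on the contact set.
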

As we will consider plurisubharmonic functions that are not necessarily negative, the Cegrell classes are not enough for our purposes. Instead, we will consider the following extension of $\mathcal{E}(\Omega)$, due to Błocki~\cite{blocki, blocki2}:
\begin{definition}
We say that $u\in\mathcal{D}(\Omega)\subset\psh(\Omega)$, and write $(dd^cu)^n =\mu$, if there exists a measure $\mu$ on $\Omega$ such that for every open set $U \subset \Omega$ and every sequence $u_j \in \psh \cap C^\infty(U)$ decreasing to $u$ in $U$, the sequence of measures $(dd^c u_j)^n$ converges weakly (i.e., against compactly supported test functions) to $\mu$ in $U$. 
\end{definition}
\begin{remark}
Błocki proved ~\cite[proof of Theorem~1.1]{blocki} that $\psh^-(\Omega)\cap\mathcal{D}(\Omega)=\mathcal{E}(\Omega)$ if $\Omega$ is hyperconvex. This yields a localization of $\mathcal{E}(\Omega)$, i.e., 
\[
u  \in \mathcal{E}(\Omega) \iff u \in \mathcal{E}(\Omega_i) \text{ for all }i.
\]
Also note that continuity of the Monge--Ampère operator with respect to monotone sequences is naturally inherited from $\mathcal{E}(\Omega)$. This follows as plurisubharmonic functions are always locally bounded from above, so for any hyperconvex subdomain $\Omega' \Subset \Omega$, $u_k \in \psh(\Omega)$, there exists $C_k>0$ such that
\[
u_k\in \mathcal{D}(\Omega) \implies u-C_k \in \mathcal{E}(\Omega').
\]
If the sequence $u_k$ converges monotonically to $u\in \mathcal{D}(\Omega)$, then we can choose the same constant for all $u_k$, and it follows that $(dd^cu_k)^n \rightarrow (dd^cu)^n$ weakly on $\Omega'$. We should also remark that the other defining property of $\mathcal{E}(\Omega)$ also holds in $\mathcal{D}(\Omega)$: If $u,v\in \psh(\Omega)$ then
\[
u \in \mathcal{D}(\Omega) \implies \max(u,v) \in \mathcal{D}(\Omega).
\]
This is a special case of \cite[Theorem~1.2]{blocki}.
\end{remark}
As we are mainly concerned with how the boundary data influence regularity, existence, and uniqueness of a solution, we will restrict the class of allowed measures to those that are \textit{compliant}, i.e. measures for which the Dirichlet problem with regard to continuous boundary values has a bounded solution. The existence of compliant measures is equivalent to the requirement that the domain be B-regular (see \cite{nilsson2}). Importantly, such measures cannot charge pluripolar sets. See \cite[Theorem~6]{xing} for a local characterization of compliant measures.

With the preliminaries in place, we now prove

\begin{theorem}\label{mainresult}
    Let $\mu$ be compliant, and suppose that $\varphi: \partial \Omega \rightarrow \R\cup\{-\infty, \infty\}$ is pluri-quasibounded, continuous and locally bounded outside a b-pluripolar set $E_\varphi$, with minorant in $\mathcal{E}(\Omega)$. Then the Dirichlet problem
    \[
    \begin{cases}
    u \in {\mathcal{D}}(\Omega) \\
    (dd^cu)^n=\mu \\
        \lim_{w \rightarrow z}u(w) = \varphi(z) \qquad \forall z \in \partial \Omega \setminus  E_\varphi,
    \end{cases}
    \]
    has a unique pluri-quasibounded solution. 
\end{theorem}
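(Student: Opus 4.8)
The plan is to realize the solution as a Perron--Bremermann upper envelope, to pin down its Monge--Ampère mass by a monotonicity argument, and to obtain uniqueness from the comparison principle of Lemma~\ref{jamfor}. The first task is to exhibit a subsolution. Since $\mu$ is compliant, there is a bounded $g \in \mathcal{E}(\Omega)$ with $(dd^c g)^n = \mu$ and vanishing boundary values, and by hypothesis $\varphi$ admits a minorant $\rho \in \mathcal{E}(\Omega)$. Because $(dd^c(g+\rho))^n \geq (dd^c g)^n = \mu$, the function $s := g + \rho$ is a subsolution lying below $\varphi$ on $\bd\Omega$, so the family
\[
\mathcal{B} := \set{ w \in \psh(\Omega) \suchthat (dd^c w)^n \geq \mu, \ \limsup_{z \to \zeta} w(z) \leq \varphi(\zeta) \ \forall \zeta \in \bd\Omega \setminus E_\varphi }
\]
is nonempty, and by Lemma~\ref{max} it is stable under taking maxima. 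I then set $u := (\sup_{w \in \mathcal{B}} w)^*$. For an a priori upper bound, let $v$ be the positive plurisuperharmonic function quasibounding $\varphi$, so $\varphi \leq \liminf_{z \to \zeta} \eps v(z) + M_\eps$ for each $\eps$. Given $w \in \mathcal{B}$, the function $w - \eps v - M_\eps$ is plurisubharmonic with nonpositive boundary $\limsup$ off $E_\varphi$; since $E_\varphi$ is b-pluripolar, adding $\delta\beta$ for the function $\beta \in \mathcal{E}(\Omega)$ with $\beta \to -\infty$ on $E_\varphi$ furnished by Lemma~\ref{bpluripolar} and letting $\delta \to 0$ erases the exceptional set in the maximum principle. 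Hence $w \leq \eps v + M_\eps$ for all $w \in \mathcal{B}$, so $u \leq \eps v + M_\eps$; together with $u \geq s$ and Lemma~\ref{Equasi} applied to $s$, this shows $u$ is pluri-quasibounded.

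Next I would identify the Monge--Ampère measure of $u$. The inequality $(dd^c u)^n \geq \mu$ follows by writing $u$ as the regularized limit of an increasing sequence $w_j \in \mathcal{B}$ (extracted via Choquet's lemma and made monotone using Lemma~\ref{max}) and invoking continuity of the operator along monotone sequences in $\mathcal{D}(\Omega)$, inherited from $\mathcal{E}(\Omega)$. The reverse inequality $(dd^c u)^n \leq \mu$ is the maximality of the envelope: were it to fail on a ball, one would solve the local Dirichlet problem there with data $u$ and mass $\mu$, glue the resulting strictly larger function, and contradict that $u$ is the supremum of $\mathcal{B}$. This local solvability and gluing is exactly where compliance of $\mu$ and the fact that $\mu$ does not charge pluripolar sets are used. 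For the boundary values at $\zeta_0 \in \bd\Omega \setminus E_\varphi$, I would squeeze: B-regularity supplies peak/barrier plurisubharmonic functions, so combining them with $g$ produces competitors in $\mathcal{B}$ forcing $\liminf_{z \to \zeta_0} u(z) \geq \varphi(\zeta_0)$, while the same barriers from above, applied to each $w \in \mathcal{B}$, give $\limsup_{z \to \zeta_0} u(z) \leq \varphi(\zeta_0)$; local boundedness and continuity of $\varphi$ near $\zeta_0$ make both estimates effective.

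For uniqueness, any pluri-quasibounded solution $u'$ has $(dd^c u')^n = \mu \geq \mu$ and boundary limit $\varphi$ off $E_\varphi$, hence $u' \in \mathcal{B}$ and therefore $u' \leq u$. To upgrade this to equality I would apply Lemma~\ref{jamfor}, taking its $v$ to be $u$ and its $u$ to be $u'$: the measures satisfy $(dd^c u')^n = \mu \leq (dd^c u)^n$, and the structural hypothesis $u' \geq \psi_{u'} + u$ with $\psi_{u'} \in \mathcal{F}(\Omega)$ is verified by noting that $u - u' \geq 0$ tends to $0$ off $E_\varphi$ (both solutions attain $\varphi$ there) and is dominated near the b-pluripolar set $E_\varphi$ by a function produced from Lemma~\ref{bpluripolar}. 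Since $u, u'$ are only in $\mathcal{D}(\Omega)$ rather than $\mathcal{E}(\Omega)$, this requires descending into the relative Cegrell classes $\mathcal{F}(v) \subset \mathcal{N}(v)$ of \cite{ahag} against a background built from the minorant $\rho$ and the quasibound $v$, using Lemma~\ref{Equasi} to guarantee the relevant comparison functions lie in $\mathcal{E}(\Omega)$. The comparison principle then yields $u' = u$.

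The main obstacle is twofold. First, establishing $(dd^c u)^n = \mu$ \emph{exactly}, rather than merely $(dd^c u)^n \geq \mu$, rests on the local balayage-and-gluing step, which is delicate precisely because $u$ is unbounded; here compliance of $\mu$ and its vanishing on pluripolar sets are indispensable. Second, and more technically demanding, is casting uniqueness into the precise hypotheses of Lemma~\ref{jamfor}: one must manufacture $\psi_{u'} \in \mathcal{F}(\Omega)$ controlling $u - u'$ and reduce genuinely unbounded members of $\mathcal{D}(\Omega)$ to the relative Cegrell class where the comparison principle applies. This reduction is the crux, and it is exactly what the combination of the minorant in $\mathcal{E}(\Omega)$, Lemma~\ref{Equasi}, and the b-pluripolar control of $E_\varphi$ via Lemma~\ref{bpluripolar} is designed to supply.
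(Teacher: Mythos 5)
Your global architecture (Perron--Bremermann envelope, identification of the mass, uniqueness via Lemma~\ref{jamfor}) runs parallel to the paper, but three of your load-bearing steps have genuine gaps, and they are exactly the steps the paper's proof is engineered to avoid. First, the deduction of pluri-quasiboundedness from below is wrong: from $u \geq s = g+\rho$ you can conclude nothing, because the minorant $\rho \in \mathcal{E}(\Omega)$ is in general \emph{not} quasibounded (e.g.\ a pluricomplex Green function), and Lemma~\ref{Equasi} cannot be ``applied to $s$''---its hypotheses are that the input is \emph{already} quasibounded from below and lies in $\mathcal{E}^a(\Omega)$, neither of which $s$ need satisfy (its Monge--Amp\`ere measure may charge pluripolar sets). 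The lower quasibound must instead be extracted from the boundary quasibound $v$ of $\varphi$ itself, e.g.\ by inserting truncated competitors of the form $\max(-\eps v, -k') + g - M_\eps$ into $\mathcal{B}$; note that truncation is unavoidable, since $-v$ is merely plurisubharmonic and need not lie in $\mathcal{D}(\Omega)$, so the condition $(dd^c(-\eps v + g))^n \geq \mu$ is not even well defined without it. Second, your route to $(dd^cu)^n \leq \mu$ by local balayage-and-gluing is asserted rather than proved, and it is genuinely problematic here: on a ball $B \Subset \Omega$ the boundary data $u|_{\partial B}$ is an unbounded-below, merely plurisubharmonic function, and no cited result solves $(dd^c\,\cdot\,)^n = \mu$ on $B$ with such data together with boundary attainment strong enough to make the glued function plurisubharmonic. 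The paper avoids this step entirely: it first solves the bounded-above problems $U_k$ with data $\min(\varphi, M_k)$ (obtained as decreasing limits of the bounded solutions of \cite[Theorem~2.3]{nilsson2}, anchored below by the $\mathcal{E}(\Omega)$-minorant so that the limits stay in $\mathcal{E}^a(\Omega)$), proves the identity $\max\bigl(U - \tfrac{1}{k} v, U_k\bigr) = U_k$ via Lemma~\ref{jamfor}, deduces $U_k \nearrow U$, and concludes $(dd^cU)^n = \mu$ from monotone continuity of the operator.

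Third, uniqueness: you propose applying Lemma~\ref{jamfor} with its $v$ taken to be the envelope $u$, but the lemma requires $v \in \mathcal{E}(\Omega)$---negative and in the Cegrell class---and an element of $\mathcal{D}(\Omega)$ is in general neither; your appeal to the relative classes $\mathcal{F}(v) \subset \mathcal{N}(v)$ of \cite{ahag} is a gesture rather than an argument, since those classes are not set up with an arbitrary member of $\mathcal{D}(\Omega)$ carrying nonzero mass as background. Likewise, a single global $\psi_{u'} \in \mathcal{F}(\Omega)$ dominating $u'-u$ is not available from the quasibounds alone: quasibounding functions in $\mathcal{E}(\Omega)$ may have infinite total Monge--Amp\`ere mass, while membership in $\mathcal{F}(\Omega)$ demands a uniform mass bound. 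The paper's fix is localization: using Lemma~\ref{Equasi} and Lemma~\ref{bpluripolar} it builds $v \in \mathcal{E}(\Omega)$ with $\limsup_{z \to E_\varphi} v(z) = -\infty$ quasibounding $U_k$, shows the discrepancy set $\{U' + C < U_k + \eps v\}$ is precompact because the boundary limits agree off $E_\varphi$ and $v$ kills $E_\varphi$, passes to a hyperconvex subdomain $\Omega' \Subset \Omega$ containing it, subtracts constants so both competitors lie in $\mathcal{E}(\Omega')$, and takes $\psi \in \mathcal{F}(\Omega)$ equal to the relevant function only on a precompact neighborhood (possible by Cegrell's observation that $\mathcal{E}^a$-functions agree with $\mathcal{F}$-functions on relatively compact subsets); only then does Lemma~\ref{jamfor} apply. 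You correctly identify the mass identity and the reduction to the comparison principle as the crux, but the proposal defers precisely those points, so as it stands it is an outline with the hard steps open.
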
 
\begin{proof}
Suppose first that $\varphi$ is bounded from above. Adding a constant if necessary, we may assume $\varphi\leq 0$. By assumption, there exists $v_\varphi \in \mathcal{E}(\Omega)$ such that
\[
\limsup_{w \rightarrow z}v_\varphi(w) \leq \varphi(z) \qquad \forall z \in \partial \Omega ,
\]
and adding an element in $\mathcal{E}(\Omega)$, we may suppose that $(dd^cv_\varphi)^n\geq\mu$.  Using \cite[Theorem~2.3]{nilsson2} there exist unique functions $u_k$ that solve \[
    \begin{cases}
    u_k \in \psh(\Omega)\cap L^\infty(\Omega)  \\
    (dd^cu_k)^n=\mu \\
        \lim_{w \rightarrow z}u_k(w) = \max(\varphi(z), -k) \qquad \forall z \in \partial \Omega \setminus  E_\varphi,
    \end{cases}
    \]
and by \cite[Corollary 3.2]{ahag2}, $u_k \geq v_\varphi$. Therefore, $u_k \searrow u \in \mathcal{E}^a(\Omega)$, and by the continuity of the complex Monge--Ampère operator on monotone sequences in $\mathcal{E}(\Omega)$, $u$ is a solution. 

We claim that any other solution $u'\in \mathcal{E}^a(\Omega)$ must coincide with $u$. Using that $\max(u_k, u') \leq u_k$ for all $k$ according to Lemma~\ref{max} and \cite[Lemma~2.1]{nilsson2}, we immediately deduce that $u \geq u'$. By Lemma~\ref{Equasi}, $u'$ is quasibounded by an element $v \in \mathcal{E}(\Omega)$, and using Lemma~\ref{bpluripolar}, we may suppose that $\limsup_{z\rightarrow E_\varphi} v(z) = -\infty$ (including points where $\varphi$ is bounded but discontinuous). Further, we may suppose that $v \leq C < 0$. Then, for each $\varepsilon>0$, the set
\[
K_\varepsilon := \overline{\{z \in \Omega \suchthat \max(u + \varepsilon v, u') > u'\}}
\]
will be compact. Now
\[
\max(u + \varepsilon v, u') \geq u' \geq \max(u + \varepsilon v, u')+ \psi_{u'},
\]
where $\psi_{u'} \in \mathcal{F}(\Omega)$ is such that $\psi_{u'} = u'$ in a precompact neighborhood of $K_\varepsilon$. Lemma~\ref{jamfor} implies that $\max(u + \varepsilon v, u') = u$, so $u + \varepsilon v\leq u' \leq u$. Letting $\varepsilon \rightarrow 0$, we conclude that $u'=u$.

In order to extend to unbounded $\varphi$, we adapt the proof of \cite[Theorem~3.2]{nilsson}. First note that by Corollary~\ref{dirichlet_flerdim}, there is a unique quasibounded harmonic function $h_\varphi$ with $\varphi$ as its boundary limits outside $E_\varphi$. By uniqueness, $h_\varphi$ is pluri-quasibounded, and so the envelope
\[
U := \sup\{u(z) \in  {\mathcal{D}}(\Omega) \suchthat u \leq h_\varphi, (dd^cu)^n\geq \mu\}
\]
is pluri-quasibounded as well, and will majorize any solution. Following standard procedures, we conclude that $U^* = U$, $(dd^cU)^n \geq \mu$, and that 
\[
\lim_{w \rightarrow z}U(w) = \varphi(z) \qquad \forall z \in \partial \Omega \setminus  E_\varphi.
\]
In order to show that $U$ is a solution, it remains to show that $(dd^cU)^n = \mu$. Let $v$ be a strictly positive plurisuperharmonic function that quasibounds $U.$ Then there exist $M_1 \leq M_2 \leq \dots$ such that
\[
|U| < \frac{1}{k} v +  M_k,
\]
hence $U-\frac{1}{k} v$ is bounded from above. Let $U_k$ denote the unique quasibounded plurisubharmonic functions that satisfy
    \[
    \begin{cases}
    U_k \in {\mathcal{D}}(\Omega) \\
    (dd^cU_k)^n=\mu \\
        \lim_{w \rightarrow z}U_k(w) = \min(\varphi(z), M_k) \qquad \forall z \in \partial \Omega \setminus  E_\varphi.
    \end{cases}
    \]
Now consider $w_k:=\max(U-\frac{1}{k} v, U_k)$. Clearly, $w_k \nearrow U$ outside a pluripolar set, and $w_k \in {\mathcal{D}}(\Omega)$ since $w_k \geq U_k$. We will first show that $(dd^cw_k)^n\geq \mu$. Pick an arbitrary hyperconvex subdomain $\Omega' \Subset \Omega$, and pick a constant $C$ such that 
\[U - C, U_k -C, w_k -C \in \mathcal{E}(\Omega').
\]
Then $\max(U-C-\frac{1}{k} \max(v,-j), U_k-C) \searrow w_k-C$, and  
\[
(dd^c\max(U-C-\frac{1}{k} \max(v,-j), U_k-C))^n \geq \mu \text{ on }\Omega'
\]
by Lemma~\ref{max}. By the defining properties of $\mathcal{E}(\Omega')$, this sequence of measures is weak*-convergent to $(dd^cw_k)^n$, and we infer that  $(dd^cw_k)^n \geq \mu$ on the entire domain since $\Omega'$ was arbitrary. 

In fact, $w_k = U_k$. To see this, note that since $v$ is strictly positive, 
\[K:=\{w_k >U_k\}\Subset \Omega.\]
Therefore
\[
w_k-M_k \geq U_k-M_k \geq w_k - M_k + \psi_K, 
\]
where $\psi_K \in \mathcal{F}(\Omega)$ is defined by $\psi_K = U_k-M_k$ on a precompact neighborhood of $K$. Again applying Lemma~\ref{jamfor}, $w_k = U_k$. A key consequence is that $U_k \nearrow U$, with the desired conclusion $(dd^cU)^n = \mu$.  

Now suppose that $U'$ is another solution, distinct from $U$. Then, since $U'\leq U$ and $U_k \nearrow U$, for $k$ large enough there is a constant $C>0$ such that $\{U' + C < U_k\}$ is nonempty. Using Lemma~\ref{Equasi} and Lemma~\ref{bpluripolar}, we may construct an element $v \in \mathcal{E}(\Omega)$ with negative singularities at all points of $E_\varphi$, quasibounding $U_k$. Consequently
\[
U_k + \varepsilon v < U' \text{ on } \partial \Omega
\]
for all $\varepsilon >0$. Pick $\varepsilon$ small enough such that
\[
E := \{U' + C < U_k + \varepsilon v\}
\]
is nonempty as well. Then, since $E\Subset \Omega,$ we can find a hyperconvex subdomain $\Omega' \Subset \Omega$ containing $E$, with 
\[
U_k + \varepsilon v < U' \text{ on } \partial \Omega'.
\]
Subtracting constants, we may assume that both sides of the inequality lie in $\mathcal{E}(\Omega')$. Now
\[
U'' := \max(U_k + \varepsilon v, U')
\]
satisfies $(dd^cU'')^n \geq \mu$ by Lemma~\ref{max}, with $\{U'' > U'\} \Subset \Omega'$. A third application of Lemma~\ref{jamfor} implies that $U''=U'$ on $\Omega'$, and  in particular $E$ is empty. We conclude that $U'=U$ on $\Omega$.
\end{proof}
\begin{remark}
We conjecture that the assumption that $\varphi$ has minorant in $\mathcal{E}(\Omega)$ could be replaced by having a minorant in $\mathcal{D}(\Omega)$. Either way, there are pluri-quasibounded boundary data where there are no such minorants. For example, on the unit sphere in $\C^2$, $\varphi(z_1,z_2)=-(-\log|z_1|)^\alpha$ is pluri-quasibounded for $0<\alpha<1$, but does not have a minorant in $\mathcal{D}(\Omega)$ unless $0<\alpha <1/2$ (see \cite{nilsson} and the references therein).
\end{remark}
\begin{remark}
Although we are not concerned with investigating the weakest possible assumptions on $\mu$ in this paper, we remark that it might be possible to consider measures with finite total mass that do not charge pluripolar sets, possibly given some additional assumptions. This is because for such measures, the Dirichlet problem is uniquely solvable for bounded, continuous boundary data $\varphi$ in the sense that there is a unique element $u$ in the Cegrell class $\mathcal{F}(\varphi)$ such that $(dd^cu)^n=\mu$ (see \cite[Definition~2.12, Theorem~6.1]{czyzbok}). However, this apriori only guarantees that the upper limits of the solution coincide with $\varphi$.  We should also remark that there exist compliant measures that do not have finite total mass, as illustrated by the subharmonic functions 
\[u_\alpha(z) := -(1-|z|^2)^\alpha, \quad 0<\alpha < 1
\]
on the unit disk (see \cite[Example~3.8]{guedj}).
\end{remark}
\begin{remark}
The formulation of the theorem allows for an arbitrary b-pluripolar set of points where $\varphi$ is continuous and bounded to be included in $E_\varphi$. We remark that it is possible to show that the negligible sets are precisely those that are b-pluripolar (i.e. b-pluripolarity of $E_\varphi$ is also necessary for the Dirichlet problem to have a unique solution), see the proof of Theorem~\ref{oluri_main}.
\end{remark}
\section{Propagation of boundary singularities}
Similar to the main results in \cite{nilsson,nilsson2}, it is possible to say something about the regularity of the solution  if one additionally requires that $\mu$ is \textit{continuously compliant}, i.e. the Dirichlet problem
\[
    \begin{cases}
    u \in \psh(\Omega)\cap L^\infty(\Omega)  \\
    (dd^cu)^n=\mu \\
        \lim_{\Omega \ni \zeta \rightarrow z_0 \in \bd \Omega } u(\zeta) = \phi(z_0), \quad \forall z_0 \in \bd \Omega
    \end{cases}
\]
has a unique solution which is continuous for every \(\phi \in C(\bd \Omega)\).

We introduce the following terminology.
\begin{definition}
For a b-pluripolar set $E \subset \partial \Omega$, we define the \textit{propagating singularity set} $ E^{\ast} \subset \bar \Omega$ of $E$ by
\[
 E^{\ast} := \bigcap_{u \in \mathscr{F}_{E}}\{u_*(z) = -\infty\},
\]
where
	\[
	\mathscr{F}_{E}:=\{u\in \psh(\Omega) \suchthat  u<0, u^*\mid_{E} = -\infty \}.
	\]
\end{definition}
Note that $\hat E \subset E^{\ast}$, where
\[
 \hat E := \bigcap_{u \in \mathscr{F}_{E}}\{u^*(z) = -\infty\}
\]
is the b-pluripolar hull of $E$. They will in general not coincide, as $E^{\ast}$ is closed (since $\{u_*(z) \neq -\infty\}$ is open). We are currently unable to determine if $E^\ast=\bar{\hat E}$ in full generality; we conjecture that this is the case, although it is known (confer \cite[Example~4.6]{ahag3}) that there exists $u\in \psh(\Omega)$ on any bounded domain $\Omega$ such that
\[
\{u_*(z) = -\infty\} \neq \overline{\{u^*(z) = -\infty\}}.
\]
However, as the following proposition shows, the two sets do coincide when arising from the singularity set of a boundary function.

\begin{proposition}
Let $\varphi: \partial \Omega \rightarrow \R\cup\{-\infty,\infty\}$ be a boundary function. Let $E_\varphi \subset \partial\Omega$ denote the set where $\varphi$ is discontinuous or equal to $\pm\infty$, and suppose that $E_\varphi$ is b-pluripolar. Then  $E_\varphi^\ast=\bar{\hat E}_\varphi$.
\end{proposition}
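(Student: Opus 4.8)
The plan is to prove the two inclusions separately, the first being routine and the second carrying all the content. For $\bar{\hat E}_\varphi\subseteq E_\varphi^\ast$ I would simply note that $u_\ast\le u^\ast$ pointwise, so $\{u^\ast=-\infty\}\subseteq\{u_\ast=-\infty\}$ for every $u\in\mathscr{F}_{E_\varphi}$; intersecting over the family gives $\hat E_\varphi\subseteq E_\varphi^\ast$, and since $E_\varphi^\ast$ is closed (each $\{u_\ast\neq-\infty\}=\bigcup_n\{u_\ast>-n\}$ being open by lower semicontinuity of $u_\ast$), the closure $\bar{\hat E}_\varphi$ is contained in $E_\varphi^\ast$ as well.

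For the reverse inclusion $E_\varphi^\ast\subseteq\bar{\hat E}_\varphi$ I would argue by contraposition: given $z_0\in\bar\Omega\setminus\bar{\hat E}_\varphi$, I must exhibit a single $u\in\mathscr{F}_{E_\varphi}$ with $u_\ast(z_0)>-\infty$, for then $z_0\notin\{u_\ast=-\infty\}\supseteq E_\varphi^\ast$. First I would fix $r>0$ with $\overline{B(z_0,2r)}\cap\bar{\hat E}_\varphi=\emptyset$; since $E_\varphi\subseteq\hat E_\varphi$, this ball avoids $E_\varphi$, so by hypothesis $\varphi$ is continuous and bounded near $z_0$. The point to stress is the nature of the difficulty: because $z_0\notin\hat E_\varphi$ there is certainly some $u_0\in\mathscr{F}_{E_\varphi}$ with $u_0^\ast(z_0)>-\infty$, but this controls only the upper limit of $u_0$ at $z_0$, whereas $u_\ast(z_0)>-\infty$ demands a genuine lower bound in a full neighborhood. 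Passing from $u^\ast$ to $u_\ast$ is exactly the gap witnessed by \cite[Example~4.6]{ahag3}, and a finite maximum over a cover fails here precisely because $\{u^\ast>-m\}$ need not be open.

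To bridge this gap I would start from a function $g\in\mathcal{E}^a(\Omega)$ with $g<0$ and $g^\ast\mid_{E_\varphi}=-\infty$ furnished by Lemma~\ref{bpluripolar}, so that $g\in\mathscr{F}_{E_\varphi}$ and $\{g=-\infty\}\cap\Omega$ is pluripolar, and then maximalize $g$ over the ball $B(z_0,r)$ by the balayage
\[
u:=\big(\sup\{w\in\psh(\Omega)\suchthat w\le 0,\ w\le g\text{ on }\Omega\setminus B(z_0,r)\}\big)^\ast .
\]
Because $g\in\mathcal{E}^a(\Omega)$, balayage (as in Lemma~\ref{Equasi}, via \cite[Theorem~2.7]{ahag}) leaves $g$ unchanged on $\Omega\setminus\overline{B(z_0,r)}$; in particular $u=g$ near $E_\varphi$, so $u^\ast\mid_{E_\varphi}=-\infty$ and $u\in\mathscr{F}_{E_\varphi}$, while $(dd^cu)^n=0$ on $B(z_0,r)$. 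The desired bound $u_\ast(z_0)>-\infty$ would then follow from the maximum principle as soon as the data $g\mid_{\partial B(z_0,r)}$ is bounded below, since the constant $\min_{\partial B(z_0,r)}g$ is an admissible competitor in the envelope.

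The hard part will be precisely securing this lower bound, i.e.\ ruling out deep dips of $g$ along $\partial B(z_0,r)$, as the pluripolar set $\{g=-\infty\}$ may a priori meet every sphere about $z_0$; this is the step where the boundary-function hypothesis is indispensable and where the general statement remains open. Here I would exploit that near $z_0$ the data $\varphi$ is continuous and bounded and that $\Omega$ is B-regular: B-regularity supplies a bounded continuous plurisubharmonic barrier, and the triviality $\omega^\ast(\cdot,E_\varphi,\Omega)\equiv0$ of the boundary relative extremal function coming from b-pluripolarity (cf.\ the proof of Lemma~\ref{bpluripolar}) allows one to choose the blow-up function $g$ so that its singularities accumulate only on $\hat E_\varphi$, hence are absent from $\overline{B(z_0,2r)}$. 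With such a $g$ the balayage $u$ is bounded below on $B(z_0,r)$, giving $u_\ast(z_0)>-\infty$ and thus $z_0\notin E_\varphi^\ast$. For boundary points $z_0\in\partial\Omega\setminus\bar{\hat E}_\varphi$ I would run the same scheme with $B(z_0,r)$ replaced by a neighborhood in $\bar\Omega$, the barrier from B-regularity providing the control of $\liminf_{z\to z_0}u(z)$. Combining the two inclusions then yields $E_\varphi^\ast=\bar{\hat E}_\varphi$.
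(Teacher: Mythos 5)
Your first inclusion $\bar{\hat E}_\varphi\subseteq E_\varphi^\ast$ is correct and is exactly the paper's observation preceding the proposition. The reverse inclusion, however, contains a genuine gap, and it sits precisely at the step you yourself flag as ``the hard part.'' Your argument hinges on being able to ``choose the blow-up function $g$ so that its singularities accumulate only on $\hat E_\varphi$, hence are absent from $\overline{B(z_0,2r)}$.'' This claim is not a technical refinement one can extract from $\omega^\ast(\cdot,E_\varphi,\Omega)\equiv 0$ together with a B-regularity barrier: the triviality of the relative extremal function only gives competitors normalized at a single point, with no locally uniform lower bound near $z_0$, and Lemma~\ref{bpluripolar} likewise controls nothing about where in $\Omega$ the sum dips toward $-\infty$ (cf.\ \cite[Example~4.6]{ahag3}, which is exactly the phenomenon at issue). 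Worse, the claim is circular: if you could produce $g\in\mathscr{F}_{E_\varphi}$ whose singularities avoid a neighborhood of $z_0$, then $g_\ast(z_0)>-\infty$ already and $z_0\notin E_\varphi^\ast$ directly --- the balayage over $B(z_0,r)$, the maximum principle, and the boundary barriers become superfluous, and what remains unproved is the proposition itself. (Two smaller slips in the same passage: Lemma~\ref{bpluripolar} delivers an element of $\mathcal{E}(\Omega)$, not of $\mathcal{E}^a(\Omega)$, so the appeal to balayage via \cite[Theorem~2.7]{ahag} as in Lemma~\ref{Equasi} is not licensed as stated; and the constant $\min_{\partial B(z_0,r)}g$ is not an admissible competitor in your envelope, since competitors must satisfy $w\le g$ on all of $\Omega\setminus B(z_0,r)$, not merely on the sphere.)

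You also mislocate where the boundary-function hypothesis enters. It is not used through continuity of $\varphi$ near $z_0$ or through barriers; the paper uses it purely topologically: the discontinuity set of a map between metrizable spaces is $F_\sigma$, and since $\overline{E^\pm}\subset E_\varphi$ one gets that $E_\varphi$ itself is $F_\sigma$, hence exhausted by compacts $E_1\subset E_2\subset\cdots$. This is what unlocks the construction: for a closed ball $\bar B\subset\bar\Omega\setminus\bar{\hat E}_\varphi$ centered at $z_0$, \cite[Proposition~4.2]{djire} applies to each \emph{compact} $E_k$ and yields negative plurisubharmonic $u_k$ with $u_k\ge -2^{-k}$ on $\bar B$ and $\limsup_{z\to E_k}u_k=-1$; the sum $\tilde u=\sum_k u_k$ then lies in $\mathscr{F}_{E_\varphi}$ (each $\eta\in E_\varphi$ lies in $E_k$ for all large $k$, so infinitely many terms contribute $-1$) while $\tilde u_\ast(z_0)\ge -1$, because the bound holds uniformly on the closed ball --- which also disposes of boundary points $z_0\in\partial\Omega$ without any separate barrier argument. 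To repair your proof you would need to replace the circular choice of $g$ by exactly this compact-exhaustion-plus-summation mechanism; the balayage scaffolding can then be discarded.
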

\begin{proof}
Define
\begin{align*}
E^\pm &:=\{\zeta \in E_\varphi \suchthat \limsup_{\partial \Omega \ni \zeta \rightarrow \eta}\varphi(\zeta) = \liminf_{\partial \Omega \ni\zeta \rightarrow \eta}\varphi(\zeta)\} \\
 E^\text{disc} &:= E_\varphi \setminus E^\pm.
\end{align*}
As $\varphi$ is a mapping between two metrizable spaces, $E^\text{disc}$ must be a $F_\sigma$ set. Furthermore, $\overline{E^\pm} \subset E_\varphi$, so
\[
E_\varphi = E^\text{disc} \cup \overline{E^\pm},
\]
and we conclude that $E_\varphi$ must also be a $F_\sigma$ set. Therefore, we may find a sequence $E_1 \subset E_2 \subset \dots \subset E_\varphi$ of compact sets $E_k$ such that $\bigcup E_k = E_\varphi$. 

Now note that the statement is clearly true if $\widebar{\hat E}_\varphi = \widebar \Omega$, so suppose that the inclusion is strict. Then, given any point $z_0 \in \widebar \Omega\setminus \widebar{\hat E}_\varphi$, we may find a small closed ball $\bar B\subset \widebar \Omega\setminus \widebar{\hat E}_\varphi$, centered at $z_0$, and by \cite[Proposition~4.2]{djire}, there exist negative plurisubharmonic functions $u_k$ such that
\[
u_k \geq -\frac{1}{2^k} \text{ on }\bar B, \quad \limsup_{z \rightarrow E_k}u_k =-1.
\]
Using Harnack's theorem, it follows that the series
\[
\tilde u := \sum_{k=1}^\infty u_k
\]
must be an element in $\mathscr{F}_{E_\varphi}$, with $\tilde u_*(z_0) \geq -1$. Since $z_0$ was arbitrary, we conclude that $E^{\ast} \subset \widebar{\hat E}$, which finishes the proof.
\end{proof}
We are now ready to prove
\begin{theorem}\label{konti}
If $\mu$ is continuously compliant, then the unique pluri-quasibounded solution to the Dirichlet problem is continuous on $\Omega \setminus E^\ast_\varphi$. 
\end{theorem}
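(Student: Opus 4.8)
The plan is to deduce continuity from a one-sided (lower) semicontinuity statement for the bounded approximants produced in the proof of Theorem~\ref{mainresult}, using continuous compliance to manufacture continuous comparison functions and the function defining $E^\ast_\varphi$ to rule out a loss of regularity at the given point. Since the solution $u$ is plurisubharmonic, it is upper semicontinuous, so it suffices to prove that $u$ is lower semicontinuous at every $z_0 \in \Omega \setminus E^\ast_\varphi$ (the case $u(z_0) = -\infty$ being vacuous). Recall from the proof of Theorem~\ref{mainresult} that $U_k \nearrow u$, where $U_k$ solves the Dirichlet problem with bounded-above data $\psi_k := \min(\varphi, M_k)$, continuous outside the b-pluripolar set $E_k \subseteq E_\varphi$. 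As an increasing limit of functions that are lower semicontinuous at $z_0$ is again lower semicontinuous at $z_0$, the problem reduces to showing that each $U_k$ is continuous at $z_0$.

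Fix $k$ and abbreviate $\psi := \psi_k$, $E := E_k$, $w := U_k$. First I would produce continuous comparison functions from below: choosing bounded continuous $\chi_m \nearrow \psi$ on $\partial\Omega \setminus E$ with $\chi_m \le \psi$ there, continuous compliance yields continuous solutions $h_m$ of $(dd^ch_m)^n = \mu$ with boundary data $\chi_m$, and the comparison principle gives $h_m \le w$ (the set $E$ being negligible since $\mu$ charges no pluripolar set). Setting $H := \sup_m h_m$, the Monge--Ampère operator is continuous along the increasing sequence, so $(dd^cH^*)^n = \mu$ and $H^*$ attains boundary data $\psi$ outside $E$; uniqueness in Theorem~\ref{mainresult} then forces $H^* = w$. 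Consequently $H = w$ outside the pluripolar negligible set $P := \{H < w\}$, and $H$, being a supremum of continuous functions, is lower semicontinuous; thus $w$ is continuous at $z_0$ as soon as $z_0 \notin P$.

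The heart of the argument is to verify that $z_0 \notin P$, which finishes the proof. Since $z_0 \notin E^\ast_\varphi$, by definition there is $f \in \psh(\Omega)$ with $f < 0$, $f^*\!\mid_{E_\varphi} = -\infty$ (hence $f^*\!\mid_{E} = -\infty$), and $f_*(z_0) > -\infty$; the latter means $f$ is bounded below on a neighborhood of $z_0$, and in particular $f(z_0) > -\infty$. For $\delta > 0$ the function $w + \delta f$ is a plurisubharmonic subsolution, $(dd^c(w+\delta f))^n \ge (dd^cw)^n = \mu$ by positivity of the mixed terms, it lies below $w$, and it tends to $-\infty$ along $E$ while having boundary values $\le \psi$ outside $E$. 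The key step is to dominate it by the continuous lower envelope, namely $w + \delta f \le H$; this is where I expect the main difficulty. Granting it, evaluation at $z_0$ gives $H(z_0) \ge w(z_0) + \delta f(z_0)$, and letting $\delta \to 0$ (using $f(z_0) > -\infty$) yields $H(z_0) \ge w(z_0)$, so $z_0 \notin P$. Reassembling, each $U_k$ is continuous at $z_0$, hence $u = \sup_k U_k$ is lower semicontinuous and therefore continuous at $z_0$; as $z_0 \in \Omega \setminus E^\ast_\varphi$ was arbitrary, the theorem follows.

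The main obstacle is the inequality $w + \delta f \le H$: one must show that the singular subsolution $w + \delta f$, which decays to $-\infty$ precisely on the boundary singular set, is captured by the envelope of continuous-data solutions with data $\le \psi$. I would approach this by truncating, $\max(w + \delta f, -j)$, whose boundary data is now genuinely continuous on all of $\partial\Omega$ (identically $-j$ near $E$, since $f \to -\infty$ there), solving the corresponding continuous problem, and passing to the limit $j \to \infty$ via the comparison principle (Lemma~\ref{jamfor}) together with Lemma~\ref{max}; the delicate bookkeeping is the interaction of the truncation level $-j$ with the regions where $w$ itself is very negative, and the verification that no boundary mass is lost over the negligible set $E$.
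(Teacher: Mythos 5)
Your outer reduction (usc plus lsc at $z_0$; $U_k \nearrow u$ with $U_k$ solving the problem for $\min(\varphi,M_k)$; hence it suffices that each $U_k$ be continuous at $z_0$) matches the second half of the paper's proof, but the inner step has a genuine gap, and you have located it yourself: the inequality $w+\delta f \le H$ is not a deferred technicality but essentially the entire theorem. What it asserts is that the non-regularized increasing envelope $H=\sup_m h_m$ does not jump at $z_0$, i.e.\ $z_0\notin P=\{H<H^*\}$; but $H$ is not upper semicontinuous, so no comparison principle applies to it, while comparing with $H^*=w$ yields only the vacuous $w+\delta f\le w$. The truncation rescue does not repair this: off $E$ the function $f$ need not have boundary limits (generically $f_*\neq f^*$ on $\partial\Omega$), so $\max(w+\delta f,-j)$ does \emph{not} have continuous boundary data --- its boundary limsup can equal $\max(\psi(\zeta),-j)$ at every $\zeta$ with $f^*(\zeta)=0$, and such a function cannot be placed below any fixed $h_m$ whose data satisfies $\chi_m\le\psi$ (that would require $\chi_m\ge\max(\psi,-j)$, impossible where $\psi<-j$), nor below $H$ itself, since $H$ admits no comparison. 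So the limit $j\to\infty$ has no inequality to pass through. In effect you are attempting to reprove, pointwise at $z_0$, the statement that discontinuities of bounded solutions propagate only into $E^\ast_\varphi$ --- exactly the external ingredient the paper imports rather than reproves. (Two smaller lapses: ``$h_m\le w$ since $E$ is negligible because $\mu$ charges no pluripolar sets'' is not an argument --- b-pluripolar boundary sets are not automatically negligible for comparison, and one needs the $\varepsilon v$ device of Lemmas~\ref{bpluripolar} and~\ref{jamfor}; and $U_k\nearrow u$ holds only off the polar set $\{v=+\infty\}$ of the quasibounding function, so $z_0$ must be arranged to avoid it.)

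The paper proceeds differently at precisely this point. It cites \cite[Theorem~2.3]{nilsson2} for the continuity on $\Omega\setminus E^\ast_\varphi$ of the bounded solutions $u_k$ with data $\max(\varphi,-C_k)$ (continuous outside a b-pluripolar set) --- this is the pointwise fact your envelope argument is missing. For $\varphi$ bounded above it then approximates the solution from \emph{above}, $u_k\searrow u_\varphi$, and uses the quasibounding function $u'\in\mathcal{E}(\Omega)$, bounded below by $-C'$ on a fixed $\Omega'\Subset\Omega\setminus E^\ast_\varphi$, together with Lemma~\ref{jamfor}, to get $u_k-C'/k\le u_k+u'/k\le u_\varphi\le u_k$ on $\Omega'$: a \emph{uniform} approximation by continuous functions, which transfers continuity to the limit without any exceptional set. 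This is the decisive structural difference: decreasing-with-uniform-error approximation avoids exactly the pluripolar jump set $P$ that defeats your increasing envelope, whereas increasing limits can only give lower semicontinuity off an uncontrolled pluripolar set. The paper therefore reserves the increasing argument ($U_k\nearrow U$, as in your outer layer and in the proof of Theorem~\ref{mainresult}) for the final step, where the approximants are already known to be continuous. To close your gap you would either have to invoke \cite[Theorem~2.3]{nilsson2} as the paper does, or independently reprove it --- the latter being a substantive result, not recoverable by the truncation-and-compare scheme as sketched.
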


\begin{proof}
Assume first that $\varphi$ is bounded from above. Denote the unique solution by $u_\varphi$ and pick an arbitrary open set $\Omega' \Subset \Omega \setminus E_\varphi^*$. Since $\Omega'$ and $E_\varphi^*$ are separated, we may find $u' \in \mathcal{E}(\Omega)$, $u' < -C$, which quasibounds $u_\varphi$ from below with $u'>-C'$ on $\Omega'$. For each $k \in \N$, we may then find $C_k\geq 0$ such that
\[
\frac {u'}{k} - C_k < u_\varphi.
\]
Now let $u_k$ denote the unique solution to 
\[
    \begin{cases}
    u_k \in \psh(\Omega)\cap L^\infty(\Omega)  \\
    (dd^cu_k)^n=\mu \\
        \lim_{w \rightarrow z}u_k(w) = \max(\varphi(z), -C_k) \qquad \forall z \in \partial \Omega \setminus  E_\varphi.
    \end{cases}
\]
By \cite[Theorem~2.3]{nilsson2}, $u_k$ is continuous on $\Omega \setminus E_\varphi^\ast$. Using Lemma~\ref{jamfor}, $u_k + \frac {u'}{k}\leq u_\varphi$. But then
\[
u_k - \frac {C'}{k}  \leq u_k + \frac {u'}{k}\leq u_\varphi \leq u_k
\]
on $\Omega'$, so $u_k \searrow u_\varphi$ uniformly on $\Omega'$. Since $\Omega'$ was arbitrary, we conclude that $u_\varphi \in C(\Omega \setminus E_\varphi^\ast)$.

For the unbounded case, it is enough to show that the unique solution $U$ is lower semicontinuous on $\Omega'$. Let $v$ be a plurisuperharmonic function that quasibounds $U$, such that $v > C$ on $\Omega'$. Then we can find constants such that
\[
U<\frac{1}{k}v +M_k,
\]
and using the proof of Theorem~\ref{mainresult}, each $U_k$, defined as the quasibounded solutions to
    \[
    \begin{cases}
    U_k \in \mathcal{D}(\Omega) \\
    (dd^cU_k)^n=\mu \\
        \lim_{w \rightarrow z}U_k(w) = \min(\varphi(z), M_k) \qquad \forall z \in \partial \Omega \setminus  E_\varphi,
    \end{cases}
    \]
coincide with $w_k=\max(U-\frac{1}{k} v, U_k)$. Therefore, $U_k \nearrow U$ outside the singularity set of $v$, and in particular on $\Omega'$. By the previous paragraph, $U_k$ is continuous on $\Omega'$, which implies that $U$ is lower semicontinuous on $\Omega'$.
\end{proof}
\begin{remark}
As \cite[Example~2.4]{nilsson2} shows, this theorem is sharp in the sense that there exist boundary data $\varphi$ for which the discontinuity set of $u_\varphi$ is precisely $E_\varphi^*$. It is currently unknown to the author whether this is always the case.
\end{remark}
\section{Maximal pluri-quasibounded solutions}
For the homogeneous case, one may replace $(dd^cu)^n=0$ with instead requiring that $u$ is maximal (the two notions coincide if $u\in \mathcal{D}(\Omega)$, see \cite{blocki2}). We shall use the following two characterizations of maximality, found in \cite{sadu}: $u\in \psh(\Omega)$ is maximal if and only if for any $u' \in \psh(\Omega)$,
\[
\{u'>u\} \Subset \Omega \implies \{u'>u\} = \emptyset,
\] 
or equivalently, for each open subset $\Omega' \Subset\Omega$,
\[
u\geq u' \text{ on }\partial \Omega' \implies u\geq u' \text{ on }\Omega'.
\]
This circumvents the need to have well-defined complex Monge--Ampère measures, and the assumption that there exists an element in  $\mathcal{E}(\Omega)$ minorizing the boundary data may be dropped. In complete analogy with Corollary~\ref{dirichlet_flerdim}, we then have the following theorem.

\begin{theorem}\label{oluri_main}
    Suppose that $\varphi: \partial \Omega \rightarrow \R\cup\{-\infty, \infty\}$ is pluri-quasibounded, continuous and locally bounded outside $E_\varphi$. Then there exists a unique pluri-quasibounded, maximal plurisubharmonic function $u$ such that
    \[
        \lim_{w \rightarrow z}u(w) = \varphi(z) \qquad \forall z \in \partial \Omega \setminus  E_\varphi
    \]
    if and only if $E_\varphi$ is b-pluripolar. Furthermore, $u$ is continuous on $\Omega \setminus E^\ast_\varphi$.
\end{theorem}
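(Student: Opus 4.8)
The plan is to follow the architecture of Theorem~\ref{mainresult} and its planar model Theorem~\ref{dirichlet_endim}, splitting into a sufficiency part (existence and uniqueness when $E_\varphi$ is b-pluripolar) and a necessity part. Two structural substitutions drive everything. First, the comparison principle (Lemma~\ref{jamfor}) is replaced by the two maximality characterizations from \cite{sadu}. Second, the pluri-quasiboundedness of $\varphi$ is used directly: if $v$ is a positive plurisuperharmonic function quasibounding $\varphi$, then for each $\eps>0$ the function $-\eps v-M_\eps$ is a genuine plurisubharmonic minorant with $\limsup_{z\to\zeta}(-\eps v-M_\eps)\le\varphi(\zeta)$. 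Since this minorant enters only as a competitor and never through its Monge--Ampère mass, no minorant in $\mathcal{E}(\Omega)$ is needed---precisely the gain advertised in the statement.

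For existence I would truncate from both sides. For the bounded data $\max(\min(\varphi,M),-j)$, which is continuous and locally bounded off $E_\varphi$, B-regularity produces a continuous maximal solution $u_{M,j}$ attaining the data off $E_\varphi$ via local barriers. Fixing $M$ and letting $j\to\infty$ yields a decreasing limit $U_M$, and then letting $M\to\infty$ yields an increasing limit $U$. The point is that maximality survives both monotone passages: for a decreasing limit $U=\lim_k u_k$ of maximal functions and any $u'\in\psh(\Omega)$ with $\{u'>U\}\Subset\Omega$, one has $\{u'>u_k\}\subset\overline{\{u'>U\}}\Subset\Omega$, so maximality of each $u_k$ forces $u'\le u_k$, hence $u'\le U$; the increasing case is standard. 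The minorant keeps $U\not\equiv-\infty$ and makes it pluri-quasibounded, while the harmonic majorant $h_\varphi$ from Corollary~\ref{dirichlet_flerdim}---available since pluri-quasibounded implies quasibounded and b-pluripolar implies b-polar---dominates each $U_M$ by the maximum principle off $E_\varphi$ and so controls $U$ from above. The correct limits off $E_\varphi$ persist because near a good boundary point the truncated data stabilizes, so one local barrier works uniformly in $M,j$; and continuity on $\Omega\setminus E_\varphi^\ast$ follows by rerunning the proof of Theorem~\ref{konti} with Lemma~\ref{jamfor} replaced by the maximality characterization, using that $\mu=0$ is continuously compliant.

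Uniqueness is where b-pluripolarity of $E_\varphi$ is spent. Given a second solution $u'$, Lemma~\ref{bpluripolar} supplies $m\in\mathcal{E}(\Omega)$, $m\le 0$, with $\limsup_{z\to E_\varphi}m=-\infty$, and I would take $m$ singular enough to dominate $v$. Then for each $\eps>0$ the function $U+\eps m$ has $\limsup_{z\to\zeta}(U+\eps m)\le\varphi(\zeta)=\lim u'$ at every $\zeta\in\partial\Omega\setminus E_\varphi$ (where $U+\eps m\le U$), while on $E_\varphi$ the singularity forces $U+\eps m\to-\infty$; hence the maximality of $u'$ on subdomains gives $U+\eps m\le u'$, and $\eps\to0$ yields $U\le u'$, with the symmetric argument giving equality. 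This is exactly the $\eps v$-perturbation of Theorem~\ref{mainresult}, now routed through maximality rather than the comparison principle.

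The main obstacle is the necessity direction: non-uniqueness whenever $E_\varphi$ is not b-pluripolar. Writing $E_\varphi$ as an increasing union of compacts and using that a countable union of b-pluripolar sets is again b-pluripolar, I would extract a compact $K\subset E_\varphi$ that is not b-pluripolar; by \cite[Proposition~3.5]{djire} its boundary relative extremal function $\omega^\ast(\cdot,K,\Omega)$ is then not identically $0$. This bounded, maximal function, vanishing on $\partial\Omega\setminus K$, is the pluripotential replacement for the positive harmonic measure $\omega_\Omega(\cdot,E_\varphi)$ used in Theorem~\ref{dirichlet_endim}, and the aim is to use it to manufacture from one solution a second with identical limits off $E_\varphi$. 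The genuine difficulty---absent in the harmonic case---is that maximal functions do not superpose, so one cannot simply add $\omega^\ast(\cdot,K,\Omega)$ to a solution; instead the competing solution must be constructed as an envelope and then separated from the first by a comparison argument anchored on $\omega^\ast(\cdot,K,\Omega)\not\equiv0$. Making this separation rigorous---showing the competing envelope is maximal, retains the correct limits off $E_\varphi$, and is provably distinct---is the crux, and is where I expect the real work to lie.
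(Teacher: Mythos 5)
Your sufficiency and uniqueness arguments track the paper's route closely (the paper likewise derives existence by rerunning Theorem~\ref{mainresult} and Theorem~\ref{konti} with Lemma~\ref{jamfor} swapped for the maximality characterizations, and perturbs by a singular element from Lemma~\ref{bpluripolar}), but one step you dismiss as ``standard'' is precisely what the paper flags as the substantial difference: maximality is \emph{not} in general preserved under increasing sequences, and your $U=\lim_{M\to\infty}U_M$ is exactly such a limit, with $U$ possibly locally unbounded below so that maximality cannot be read off from $(dd^c U)^n=0$. The paper avoids this by defining the candidate directly as the envelope $U=\sup\{u\in\psh(\Omega)\suchthat u\le h_\varphi\}$ and using the patching trick: if $u'\le U$ on $\partial\Omega'$ for $\Omega'\Subset\Omega$, then the function equal to $U$ off $\Omega'$ and to $\max(u',U)$ on $\Omega'$ lies in the defining family, whence $u'\le U$ on $\Omega'$. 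Your uniqueness perturbation has a related soft spot: since the competing solution $u'$ is itself only pluri-quasibounded, it may tend to $-\infty$ along $E_\varphi$, so ``take $m$ singular enough to dominate $v$'' needs the combined Lemma~\ref{Equasi}/Lemma~\ref{bpluripolar} construction to make $\{U+\eps m>u'\}\Subset\Omega$ honest; this is fixable in the paper's manner, not a fatal flaw.

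The genuine gap is the necessity direction, which you explicitly leave open at its crux. The paper's construction runs as follows. Let $u_\varphi$ be the solution, quasibounded by $v\ge|u_\varphi|$, and decompose $E_\varphi=E^+\cup E^-\cup E^s$ according to whether $u_\varphi$ tends to $+\infty$, to $-\infty$, or neither; since $|u_\varphi|\le v$, the sets $E^\pm$ are automatically b-pluripolar, so if $E_\varphi$ is not b-pluripolar then $E^s$ is not (this removes any need for your compact-exhaustion step and its unproved countable-union stability of b-pluripolar sets). The competing solution is the envelope $u_{L,k}:=\sup\{u\in\psh(\Omega)\suchthat u\le v,\ u^*\le\max(\varphi,-k)\text{ on }\partial\Omega\setminus E^s,\ u^*\le -L\text{ on }E^s\}$: the decisive idea you were missing is to impose only a \emph{finite} depth $-L$ on $E^s$ rather than a genuine singularity, which keeps the family rich enough to retain the boundary data. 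Maximality of $u_{L,k}^*$ is proved via Choquet's lemma, upgrading to an increasing sequence of maximal, locally bounded functions with $(dd^cu_j)^n=0$ and invoking monotone continuity of the operator; separation from $u_\varphi$ comes from the bound $u'\le L\,\omega^*(z,E^s,\Omega)+v$ for every competitor, since by \cite[Proposition~3.5]{djire} non-b-pluripolarity of $E^s$ gives $\omega^*(\cdot,E^s,\Omega)\le -C<0$ on some $\Omega'\Subset\Omega$, so choosing $L>(v(z_0)-u_\varphi(z_0))/C$ at a point with $v(z_0)<\infty$ forces $u_{L,k}^*(z_0)<u_\varphi(z_0)$. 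Finally, letting $k\to-\infty$ gives a \emph{decreasing} limit $U_L$ (maximality is preserved there, as your own argument shows), still pluri-quasibounded since $|u_{L,k}^*|\le\eps v+M_\eps+L$, with the correct limits off $E_\varphi$ --- a second solution, contradicting uniqueness. Without this (or an equivalent) construction, your proposal proves only one direction of the equivalence.
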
 
\begin{proof}
Sufficiency, as well as the continuity statement, can be inferred from the proofs of Theorem~\ref{mainresult} and Theorem~\ref{konti}, replacing the uses of Lemma~\ref{jamfor} with the above maximality property. The only substantial difference is to show that the solution candidate
\[
U := \sup\{u(z) \in  \psh(\Omega) \suchthat u \leq h_\varphi\}
\]
is maximal, as maximality is not necessarily preserved on increasing sequences. But this follows from the following standard trick: Suppose that $u'\in \psh(\Omega)$ satisfies $U\geq u'$ on $\partial \Omega'$, with $\Omega' \Subset \Omega$. Then
\[
        u'' := \begin{cases}
            U, & z \in \Omega \setminus \Omega' \\
            \max(u', U), & z \in \Omega'
        \end{cases}
    \]
    belongs to the defining envelope of $U$, and we conclude that $u' \le U$ on $\Omega'$, i.e. $U$ is maximal. 

It remains to show that $E_\varphi$ is necessarily b-pluripolar. Denote the unique solution by $u_\varphi$, and suppose without loss of generality that $v\geq |u_\varphi|$  is a positive plurisuperharmonic function that quasibounds $u_\varphi$. 
Decompose 
\[
E_\varphi= E^+ \cup E^- \cup E^s,
\]
where
\begin{align*}
    E^+&:=\{\zeta \in E_\varphi \suchthat \liminf_{z\rightarrow \zeta}u_\varphi =\infty\}\\
    E^-&:=\{\zeta \in E_\varphi \suchthat \limsup_{z\rightarrow \zeta}u_\varphi =-\infty\} \\
    E^s&:=E_\varphi\setminus E^+ \cup E^-.
\end{align*}
Clearly $E^+\cup E^-$ is b-pluripolar, since $|u_\varphi|\leq v$. Suppose for a contradiction that $E^s$ is not b-pluripolar. Define for each $k$
\[
\mathscr{F}_k:= \{u \in \psh(\Omega) \suchthat u \leq v \text{ and } u^* \leq \max(\varphi, -k) \text{ on }\partial\Omega \setminus E^{s}\}
\]
and consider the envelope
\[
u_{L,k} := \sup\{u(x) \suchthat u\in \mathscr{F}_k,  u^* \leq -L \text{ on } E^{s}\}
\]
for $L>0$. By the Brelot--Cartan theorem \cite[Theorem~4.42]{guedj}, $u_{L,k}^*$ is a plurisubharmonic function, which clearly attains the correct boundary limits on $\partial\Omega\setminus \big(E^s\cup\{\varphi < k\} \big)$. We shall first prove that $u_{L,k}^*$ is maximal as follows. Using Choquet's lemma \cite[Lemma~4.31]{guedj}, there exist $u_1,u_2, \ldots$ in the defining envelope of $u_{L,k}$ such that
\[
(\sup_j u_j)^* = u_{L,k}^*.
\]
Replacing $u_j$ with 
\[
\sup\{u(z)  \suchthat u \in \psh(\Omega), u^* \leq \max(-k-L, u_1, \dots, u_{j-1}) \text{ on }\partial \Omega\},
\]
we may assume that the sequence $(u_j)$ is increasing, with each $u_j$ maximal and bounded from below. Since $u_j \in L^\infty_{loc}(\Omega)$, maximality implies that $(dd^cu_j)^n = 0$. By monotonicity of the complex Monge--Ampère operator, we conclude that
\[
(dd^cu_{L,k}^*)^n = 0,
\]
which shows that $u_{L,k}^*$ is maximal. 

We will now show that for $L$ large enough, there exists $z_0 \in \Omega$ such that $u_{L,k}^*(z_0) <u_\varphi(z_0)$. Indeed, for any function $u'$ in the defining envelope,
\[
u' - v \leq 0, \quad \limsup_{z\rightarrow E^s}(u'-v) \leq -L,
\]
which implies that
\[
u' \leq L \cdot \omega^*(z,E^{s},\Omega) + v,
\]
where 
\[
\omega^*(z,E^{s},\Omega) = \big(\sup\{u(x) \suchthat u\in \psh^-(\Omega), u \leq -1 \text{ on }E^{{s}}\}\big)^*
\]
denotes the boundary relative extremal function. Since $E^s$ is not b-pluripolar, by \cite[Proposition~3.5]{djire}, $\omega^*(z,E^{s},\Omega)$ is not identically zero. Therefore, for any $\Omega'\Subset \Omega$, there is a constant $C>0$ such that 
\[
-1< \omega^*(z,E^{s},\Omega) < -C \text{ on } \Omega',
\]
and we conclude that $u_{L,k}^* \leq -LC + v$ on $\Omega'$. Furthermore, since $\Omega'$ is open, there exists $z_0 \in \Omega'$ such that 
 $v(z_0)<\infty$, and fixing $L$ with
\[
L > \frac{v(z_0)-u_\varphi(z_0)}{C}
\]
yields $u_{L,k}^*(z_0) <u_\varphi(z_0)$.

To finish the proof, notice that for each $\varepsilon>0$, there exists $M_\varepsilon$ such that
\[
 |u_{L,k}^*| \leq \varepsilon v + M_\varepsilon-L\omega^*(z,E^{s},\Omega) \leq \varepsilon v + {M_\varepsilon + L}
\]
for all $k<0$. Therefore, letting $k \rightarrow - \infty$, the sequence $(u_{L,k}^*)$ decreases to a pluri-quasibounded, plurisubharmonic function $U_L$, which is maximal since maximality is preserved on decreasing sequences. It is also clear that $U_L$ satisfies
\[
        \lim_{w \rightarrow z}u_L(w) = \varphi(z) \qquad \forall z \in \partial \Omega \setminus  E_\varphi,
\]
so $U_L$ is another solution, by construction distinct from $u_\varphi$.
\end{proof}
\begin{remark}
As mentioned in the third remark to Theorem~\ref{mainresult}, small adjustments to the above proof also yield necessity of having $E_\varphi$ b-pluripolar for the inhomogeneous case. 
\end{remark}

\end{document}